\newcounter{item}[section]
\newcounter{kirshr}
\newcounter{kirsha}
\newcounter{kirshb}
\newenvironment{enumarab}{\setcounter{kirshb}{1}
\begin{list}{(\arabic{kirshb})}{\usecounter{kirshb}} }{\end{list}}
\newenvironment{athm}[1]{\vskip3mm\par\noindent
{\bf #1 }. \slshape }
{\upshape\par\vskip10pt minus3pt}
\newtheorem{theorem}{Theorem}[section]
\newtheorem{lemma}[theorem]{Lemma}
\newtheorem{corollary}[theorem]{Corollary}
\newenvironment{demo}[1]{\noindent{\bf #1.}\upshape\mdseries}
{\nopagebreak{\hfill\rule{2mm}{2mm}\nopagebreak}\par\normalfont}
\theoremstyle{definition}
\newtheorem{remark}[theorem]{Remark}
\newtheorem{example}[theorem]{Example}
\newtheorem{definition}[theorem]{Definition}
\def\Q{\mathbb{Q}}
\def\C{{\mathfrak{C}}}
\def\Fm{{\mathfrak{Fm}}}
\def\Nr{{\mathfrak{Nr}}}
\def\Fm{{\mathfrak{Fm}}}
\def\A{{\mathfrak{A}}}
\def\B{{\mathfrak{B}}}
\def\C{{\mathfrak{C}}}
\def\M{{\mathfrak{M}}}
\def\N{{\mathfrak{N}}}
\def\Sn{{\mathfrak{Sn}}}
\def\CA{{\bf CA}}
\def\RCA{{\bf RCA}}
\def\(R)RA{{\bf (R)RA}}
\def\Q{\mathbb{Q}}
 \def\CA{{\sf CA}}
\def\B{{\sf B}}
\def\tp{{\sf tp}}
\def\Nr{{\mathfrak{Nr}}}
\def\Nr{{\mathfrak{Nr}}}
\def\A{{\mathfrak{A}}}
\def\B{{\mathfrak{B}}}
\def\C{{\mathfrak{C}}}
\def\A{{\mathfrak{A}}}
\def\B{{\mathfrak{B}}}
\def\C{{\mathfrak{C}}}
\def\L{{\mathfrak{L}}}
\def\L{{\mathfrak{L}}}
\def\CA{{\bf CA}}
\def\RCA{{\bf RCA}}
\title{An instance of Vaught's conjecture using algebraic logic}
\author{Mohammad Assem and Tarek Sayed Ahmed}
\begin{document}
\maketitle
\begin{abstract} We indicate a way of distinguishing between (what we call) Henkin ultrafilters of locally finite cylindric
and quasi-polyadic algebras, for which, two ultrafilters are said to be \emph{distinguishable.} We give a result
about the  number of distinguishable ultrafilters in a given locally finite countable algebra.
In model theoretic  terms, our main result says
that \emph{for any first order theory $T$ in a countable language, with or {\it without} equality,
if it has an uncountable set of countable models that are pairwise distinguishable, then actually it has such a set of size $2^{\aleph_0}$.}
\end{abstract}

\section{Introduction}

The subject of this paper is an algebraic version of the work in \cite{sep}, and also an extension thereof;
we consider the number of countable models for a countable omitting a given set of non-isolated types, and we consider
Vaught's conjecture for infinitary extensions of first order logic, also counting what we call weak models omitting non isolated types.
We also study omitting types for finite variable fragments of first order logic; we give a proof to a result mentioned (without proof) 
in \cite{Sayed}.

In 1961, Robert Vaught asked the following question:  Given a complete theory in a countable language, is it the case
that it  either has countably many or $2^{\aleph_0}$ non-isomorphic countable models?
By the number of non-isomorphic countable models is meant the number of their isomorphism-types; that is
the number of equivalence classes of countable models w.r.t. the isomorphism relation between structures.
We shall just say ``the number of countable models"  to mean the number of their isomorphism-types.

The positive answer to the question is more commonly know as Vaught's Conjecture.
(Vaught;s conjecture has the reputation of being the most important open problem in
model theory.)\footnote{ However, some logicians do not agree to this sweeping statement.
Quoting Shelah on this: {\it Poeple say that settling  Vaught's conjecture is the most important problem in Model theory, because
it makes us understand countable models of countable theories, which are the most important models.
We disagree with all three statements.}}

Morley proved that the number of countable models is either less than or equal to the first uncountable cardinal ($\leq \aleph_1$)
or else it has the power of the continuum. This is the best known (general) answer to Vaught's question.
(We will give an algebraic proof of Morley's theorem below)
Later other logicians confirmed Vaught's conjecture in some special cases of theories, for example:

\begin{enumerate}
\item (Shelah \cite{93})$\omega$-stable theories;
\item (Buechler \cite{19})superstable theories of finite $U$-rank;
\item (Mayer \cite{69})o-minimal theories;
\item (Miller \cite{100})theories of linear orders with unary predicates;
\item (Steel \cite{98})theories of trees.
\end{enumerate}

There are also attempts  concerning special kinds of models to count and aso relations other than
isomorphisms between models. Vaught's conjecture can be translated to counting the number of orbits corresponding to the action of
$S_{\infty}$ the symmetric group on $\omega$ on the Polish space of countable models.
One way to obtain a positive result is to consider only isomorphisms induced by a {\it subgroup} $G$ of $S_{\infty}$
Vaught's conjecture has been confirmed when $G$ is solvable; the best result in this type of investigations, is the case when $G$ is a cli group.

Our work here is inspired by Gabor Sagi, who approached Vaught's conjecture using the machinery of algebraic logic.

Here we consider, what we believe is an interesting equivalence
relation between models weaker than isomorphism
and show that it has either countably many classes, or else continuum many
(actually we prove something stronger). We also show that the same applies for the restriction of this relation to
models omitting a given family of types (possibly uncountable but $< 2^{\omega}$).
Our results, formulated for locally finite cylindric and quasi polyadic algebras,
hold for languages with or without equality and  also for theories that  are not necessarily complete.

\section*{Notation} Our system of notation is mostly standard, but the following list may be useful.
Throughout, both $\omega$ and $\mathbb{N}$ denote the set of natural numbers
and for every $n\in\omega$ we have $n=\{0,\ldots,n-1\}.$ Let $A$ and $B$ be sets.
Then ${}^AB$ denotes the set of functions whose domain is $A$ and whose range is a subset of $B.$
In addition, $|A|$ denotes the cardinality of $A$ and $\mathcal{P}(A)$ denotes the power set of $A$, that is, the set of all subsets of $A$.
If $f:A\longrightarrow B$ is a function and $X\subset A,$ then $f^*(X)=\{f(x):x\in X\}$ and $f|_X$ is the restriction of $f$ to $X$.
Moreover, $f^{-1}:\mathcal{P}(B)\longrightarrow\mathcal{P}(A)$ acts between the power sets.

{\bf Layout of the paper} In section one we give the basic preliminaries and concepts from cylindric and quasi-polyadic algebras.
We will only be concerned with the
locally finite
case (those are the algebras corresponding in an exact sense to first order logic with and without equality, respectively).
In section 2 we give a purely algebraic proof of Morley's theorem. In section 3,
we formulate and prove our main result. In section 4, we consider the number of models omitting (possibly uncountable many) types.
In the final section, we approach the case of proper extensions of first order logic.

\section{Counting ultrafilters}

Let $\A$ be any Boolean algebra. The set of ultrafilters of $\A$ is denoted by $\mathcal{U}(\A)$. The stone topology  makes $\mathcal{U}(\A)$ a compact Hausdorff space; we denote this space by $\A^*$. Recall that the Stone topology has as its basic open sets the sets $\{N_x:x\in A\}$, where
$$N_x=\{\mathcal F\in\mathcal{U}(\A):x\in\mathcal F\}.$$
It is easy to see that if $A$ is countable, then $\A^*$ is \emph{Polish}, (i.e., separable and completely metrizable).

Now, suppose $\A$ is a locally finite cylindric or quasi-polyadic $\omega$-dimensional algebra with a countable universe.
Note that if $T$ is a theory in a countable language with (without) equality, then $CA(T)$, (respectively $QPA(T)$), satisfies these requirements.
Let $$\mathcal{H}(\A)=\bigcap_{i<\omega,x\in A}(N_{-c_ix}\cup\bigcup_{j<\omega}N_{s^i_jx})$$ and, in the cylindric algebraic case, let

$$\mathcal{H}'(\A)=\mathcal{H}(\A)\cap\bigcap_{i\neq j\in\omega}N_{-d_{ij}}.$$
Note, for later use, that $\mathcal{H}(\A)$ and $\mathcal{H}'(\A)$ are $G_\delta$ subsets of $\A^*$, and are nonempty,  as a matter of face it is dense-- this latter fact can be seen,
for example, from Theorem \ref{th1} below -- and are therefore Polish spaces; (see \cite{Kechris}).
Assume $\mathcal F\in \mathcal{H}(\A).$ For any $x\in A$, define the function $\mathrm{rep}_{\mathcal F}$ to be $$\mathrm{rep}_{\mathcal F}(x)=\{\tau\in{}^\omega\omega:s^+_\tau x\in \mathcal F\}.$$
We have the following results due to G. S\'agi and D. Szir\'aki; (see \cite{Sagi}).
\begin{theorem}\label{th1}
If $\mathcal F\in \mathcal{H}'(\A)$, (respectively $\mathcal{H}(\A)$), then $\mathrm{rep}_{\mathcal F}$ is a homomorphism from $\A$ onto an element of $Lf_\omega\cap Cs_\omega^{reg}$, (respectively $LfQPA_\omega\cap Qs_\omega^{reg}$), with base $\omega.$ Conversely, if $h$ is a homomorphism from $\A$ onto an element of $Lf_\omega\cap Cs_\omega^{reg}$, (respectively $LfQPA_\omega\cap Qs_\omega^{reg}$), with base $\omega$, then there is a unique $\mathcal F\in \mathcal{H}'(\A)$, (respectively $\mathcal{H}(\A)$), such that $h=\mathrm{rep}_{\mathcal F}.$
\end{theorem}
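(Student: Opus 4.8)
The plan is to establish the two directions of the correspondence separately. First I would fix $\mathcal F\in\mathcal H(\A)$ (the cylindric case with $\mathcal H'(\A)$ adds only the treatment of diagonals) and verify that $\mathrm{rep}_{\mathcal F}$ is a Boolean homomorphism: it sends $1$ to ${}^\omega\omega$ and $0$ to $\emptyset$ because $\mathcal F$ is an ultrafilter, it is monotone because $x\le y$ implies $s^+_\tau x\le s^+_\tau y$ in $\A$ and filters are upward closed, and it commutes with $-$ and $\cdot$ because $\mathcal F$ is an ultrafilter and $s^+_\tau$ is a Boolean endomorphism. Then I would check that $\mathrm{rep}_{\mathcal F}$ respects the cylindrifications: the inclusion $c_i x\in\mathcal F\Rightarrow s^i_j x\in\mathcal F$ for some $j$ is exactly the defining condition $\mathcal F\in N_{-c_ix}\cup\bigcup_j N_{s^i_jx}$, and this is what forces the range to be \emph{regular} in the sense of $Cs_\omega^{reg}$; conversely $s^i_j x\in\mathcal F$ for some $j$ forces $c_i x\in\mathcal F$ by the $CA$ axiom $s^i_j x\le c_i x$. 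The local finiteness of $\A$ transfers to the image because $\Delta(\mathrm{rep}_{\mathcal F}(x))\subseteq\Delta x$, so the image lands in $Lf_\omega$; and in the $\mathcal H'$ case the condition $\mathcal F\in N_{-d_{ij}}$ for all $i\neq j$ is what makes the image a genuine $Cs_\omega$ (a set algebra of sequences, not a diagonal-collapsed one) rather than merely a $Crs$.

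Next I would verify surjectivity onto a subalgebra: the image $\mathrm{rep}_{\mathcal F}^*(A)$ is closed under the operations by the homomorphism property just established, so $\mathrm{rep}_{\mathcal F}$ maps $\A$ onto the set algebra it generates, and one identifies this algebra as an element of $Lf_\omega\cap Cs_\omega^{reg}$ with base $\omega$ (the base is $\omega$ since the unit is ${}^\omega\omega$). The substitution operators $s^+_\tau$ in the image are computed pointwise as $(s^+_\tau)^{\mathcal F} X=\{\sigma\in{}^\omega\omega:\sigma\circ\tau\in X\}$ up to the usual reindexing convention, which is what makes the structure a concrete set algebra and makes $\mathrm{rep}_{\mathcal F}$ a homomorphism of the full quasi-polyadic (or cylindric) similarity type.

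For the converse, given a homomorphism $h:\A\to\mathfrak C$ with $\mathfrak C\in Lf_\omega\cap Cs_\omega^{reg}$ and base $\omega$, I would pick any fixed element, say the constant sequence $\bar 0\in{}^\omega\omega$ (or more robustly an arbitrary point of the base depending on how the paper has set up $s^+$), and set $\mathcal F=\{x\in A: \bar 0\in h(x)\}$; equivalently $\mathcal F=h^{-1}(N_{\bar 0})$ pulled back from the principal ultrafilter of $\mathfrak C$ at $\bar 0$. This is an ultrafilter of $\A$ since it is the preimage of an ultrafilter under a homomorphism. One then checks $\mathcal F\in\mathcal H(\A)$ (resp.\ $\mathcal H'(\A)$): regularity of $\mathfrak C$ guarantees that $c_i x\in\mathcal F$, i.e.\ $\bar 0\in h(c_i x)=c_i h(x)$, yields a $j$ with $\bar 0\in s^i_j h(x)=h(s^i_j x)$, hence $s^i_j x\in\mathcal F$; and when $\mathfrak C$ is a $Cs_\omega$ with $d_{ij}$ the genuine diagonal, $\bar 0\notin h(d_{ij})$ for $i\neq j$ since $\bar 0(i)=\bar 0(j)$ fails — wait, this needs care: one must choose the evaluation point to \emph{avoid} the diagonal, so I would instead fix a point $p\in{}^\omega\omega$ that is injective (all coordinates distinct) to guarantee $p\notin h(d_{ij})$, which is possible since the base is the infinite set $\omega$. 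Finally I would prove $h=\mathrm{rep}_{\mathcal F}$ by unwinding definitions: $s^+_\tau x\in\mathcal F$ iff $p\in h(s^+_\tau x)=s^+_\tau h(x)=\{\sigma: \sigma\circ\tau\in h(x)\}$ iff $p\circ\tau\in h(x)$, and as $\tau$ ranges over ${}^\omega\omega$ and $p$ is a bijection $\omega\to\omega$, $p\circ\tau$ ranges over all of ${}^\omega\omega$, so $\{\tau: p\circ\tau\in h(x)\}=h(x)$ after the identification, giving $\mathrm{rep}_{\mathcal F}(x)=h(x)$; uniqueness of $\mathcal F$ follows because any two Henkin ultrafilters inducing the same $\mathrm{rep}$ agree on $\{x: \mathrm{id}\in\mathrm{rep}(x)\}$, which recovers the ultrafilter.

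The main obstacle I anticipate is bookkeeping around the evaluation point and the substitution convention: one has to be careful that the chosen point of the base is simultaneously generic enough to see every element through some substitution (so that $\mathrm{rep}_{\mathcal F}$ is onto and injective-on-the-nose matching $h$) and, in the cylindric/$\mathcal H'$ case, off-diagonal (so that $\mathcal F$ lands in $\bigcap_{i\neq j}N_{-d_{ij}}$); reconciling these requires using that the base $\omega$ is infinite and picking $p$ a bijection, and checking that the regularity hypothesis on $Cs_\omega^{reg}$ is exactly the algebraic shadow of the Henkin condition defining $\mathcal H(\A)$. The rest — the homomorphism verification and the transfer of local finiteness — is routine axiom-chasing using that each $s^+_\tau$ (and each $c_i$, $s^i_j$) behaves correctly under ultrafilters.
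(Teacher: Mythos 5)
The paper does not actually prove this theorem---it is quoted from S\'agi and Szir\'aki---so your attempt can only be measured against the standard argument. Your forward direction is essentially that argument and is fine modulo routine verification. One quibble: regularity of the image comes from the fact that $s^+_\tau x$ depends only on the restriction of $\tau$ to $\Delta x$ (together with a short induction to pass from $\Delta x$ down to $\Delta(\mathrm{rep}_{\mathcal F}(x))$), not from the Henkin condition; the Henkin condition is what makes the concrete cylindrification $c_i$ come out right. But the conclusion is correct.

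The converse direction contains a genuine error. Having rightly discarded the constant sequence $\bar 0$ because it lies on every diagonal, you replace it by an \emph{arbitrary} bijection $p:\omega\to\omega$ and set $\mathcal F=\{x\in A: p\in h(x)\}$. Unwinding as you do (and using regularity to pass from the finite transformation $\tau'$ back to $\tau$), one gets $\mathrm{rep}_{\mathcal F}(x)=\{\tau: p\circ\tau\in h(x)\}=\{p^{-1}\circ\sigma:\sigma\in h(x)\}$, and your final step---that this equals $h(x)$ because $\tau\mapsto p\circ\tau$ permutes ${}^\omega\omega$---is false: a bijection of the space onto itself need not fix the subset $h(x)$. (Take $h(x)=\{\sigma:\sigma(0)=0\}$ and $p$ the transposition of $0$ and $1$.) What you actually obtain is $h$ composed with the base automorphism induced by $p^{-1}$, which differs from $h$ unless $p=\mathrm{Id}$. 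The correct, and in fact forced, choice is $p=\mathrm{Id}$: it is injective, so it avoids every $h(d_{ij})$ with $i\neq j$, which dissolves the tension you worry about at the end; it gives $s^+_{\mathrm{Id}}x=x$, whence $\mathrm{rep}_{\mathcal F}(x)=\{\tau:\tau'\in h(x)\}=\{\tau:\tau\in h(x)\}=h(x)$ by regularity (this, again, is where $Cs^{reg}_\omega$ is used); and your own uniqueness computation, which recovers $\mathcal F$ as $\{x:\mathrm{Id}\in\mathrm{rep}_{\mathcal F}(x)\}=\{x:\mathrm{Id}\in h(x)\}$, already shows that no other evaluation point can work. So your uniqueness paragraph is correct but contradicts your existence construction; replacing $p$ by $\mathrm{Id}$ throughout repairs the proof.
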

\begin{theorem}
Let $T$ be a consistent first order theory in a countable language with (without) equality. Let $\mathcal{M}_0$ and $\mathcal{M}_1$ be two models of $T$ whose universe is $\omega$. Suppose $\mathcal{F}_0,\mathcal{F}_1\in\mathcal{H}'(CA(T))$, (respectively $\mathcal{H}(QPA(T))$), are such that $\mathrm{rep}_{\mathcal{F}_i}$ are homomorphisms from $CA(T)$, (respectively $QPA(T)$), onto $Cs(\mathcal{M}_i)$, (respectively $Qs(\mathcal{M}_i)$), for $i=0,1$. If 
$\rho:\omega\longrightarrow\omega$ is a bijection,
then the following are equivalent:
 \begin{enumerate}
 \item $\rho:\mathcal{M}_0\longrightarrow\mathcal{M}_1$ is an isomorphism.
 \item $\mathcal{F}_1=s^+_\rho \mathcal{F}_0=\{s^+_\rho x:x\in \mathcal{F}_0\}.$
\end{enumerate}
\end{theorem}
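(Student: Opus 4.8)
The plan is to prove the two implications separately, using Theorem~\ref{th1} to translate between homomorphisms onto regular set algebras and Henkin ultrafilters, and using the basic properties of the substitution operators $s^+_\tau$ in locally finite (quasi-)polyadic algebras. First I would set up notation: write $h_i = \mathrm{rep}_{\mathcal{F}_i}$, so that by hypothesis $h_i$ maps $CA(T)$ (resp.\ $QPA(T)$) onto $Cs(\mathcal{M}_i)$ (resp.\ $Qs(\mathcal{M}_i)$). The key computational fact I would isolate at the outset is a ``change of variables'' identity: for a bijection $\rho:\omega\longrightarrow\omega$ and any $x\in A$, one has $s^+_\tau(s^+_\rho x) \in \mathcal{F}_0$ iff $s^+_{\rho\circ\tau} x \in \mathcal{F}_0$, so that $\mathrm{rep}_{s^+_\rho\mathcal{F}_0}(x)$ is obtained from $\mathrm{rep}_{\mathcal{F}_0}(x)$ by a $\rho$-coordinate shift on the indexing set ${}^\omega\omega$. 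This is exactly the algebraic shadow of the fact that a bijection of the base induces, by composition, an isomorphism between the two set algebras.

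For the direction $(1)\Rightarrow(2)$: assuming $\rho:\mathcal{M}_0\longrightarrow\mathcal{M}_1$ is an isomorphism, I would first note that $s^+_\rho\mathcal{F}_0$ is again an ultrafilter in $\mathcal{H}'(CA(T))$ (resp.\ $\mathcal{H}(QPA(T))$) — this uses that $s^+_\rho$ is a Boolean automorphism of $\A$ in the polyadic case, and in the cylindric case one checks directly that the defining $G_\delta$ conditions are preserved by a bijective $\rho$. Then I would show $\mathrm{rep}_{s^+_\rho\mathcal{F}_0}$ is a homomorphism of $CA(T)$ (resp.\ $QPA(T)$) onto $Cs(\mathcal{M}_1)$ (resp.\ $Qs(\mathcal{M}_1)$): concretely, because $\rho$ is an isomorphism of structures, for every formula $\phi$ the assignments satisfying $\phi$ in $\mathcal{M}_1$ are the $\rho$-images of those satisfying $\phi$ in $\mathcal{M}_0$, which is precisely the statement that $\mathrm{rep}_{s^+_\rho\mathcal{F}_0}$ and $\mathrm{rep}_{\mathcal{F}_1}$ agree on $A$. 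The uniqueness clause of Theorem~\ref{th1} then forces $s^+_\rho\mathcal{F}_0 = \mathcal{F}_1$.

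For the direction $(2)\Rightarrow(1)$: assuming $\mathcal{F}_1 = s^+_\rho\mathcal{F}_0$, I would run the previous paragraph backwards. By the change-of-variables identity, $\mathrm{rep}_{\mathcal{F}_1} = \mathrm{rep}_{s^+_\rho\mathcal{F}_0}$ is the composite of $h_0$ with the $\rho$-induced map on the set algebra with base $\omega$; comparing this with the hypothesis that $\mathrm{rep}_{\mathcal{F}_1}$ has image $Cs(\mathcal{M}_1)$ (resp.\ $Qs(\mathcal{M}_1)$), I conclude that $\rho$ carries the set of satisfying assignments of $\phi$ in $\mathcal{M}_0$ exactly onto the set of satisfying assignments of $\phi$ in $\mathcal{M}_1$, for every formula $\phi$. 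Specializing $\phi$ to atomic formulas (the relation symbols and, in the equality case, equations) yields that $\rho$ respects all basic relations in both directions, i.e.\ $\rho$ is an isomorphism $\mathcal{M}_0\longrightarrow\mathcal{M}_1$.

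The main obstacle I anticipate is the bookkeeping around the substitution operators $s^+_\tau$ for arbitrary $\tau\in{}^\omega\omega$ (not just finite transformations), since $\mathrm{rep}_{\mathcal{F}}(x)$ is defined via these, and one must verify that $s^+_\rho$ genuinely behaves like a base automorphism — in particular that the cylindric defining conditions cutting out $\mathcal{H}'$ are invariant under a bijective $\rho$, and that the identity $s^+_\tau\circ s^+_\rho = s^+_{\rho\circ\tau}$ (or its mirror, depending on the composition convention in the cited source) holds on all of $\A$. Once that identity and the invariance of $\mathcal{H}'$ (resp.\ $\mathcal{H}$) under $s^+_\rho$ are nailed down, both implications are short, with the uniqueness half of Theorem~\ref{th1} doing the real work in $(1)\Rightarrow(2)$.
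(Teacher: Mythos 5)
The paper does not actually prove this theorem: it is quoted verbatim from S\'agi and Szir\'aki (the sentence preceding Theorem~\ref{th1} attributes both results to \cite{Sagi}), so there is no in-paper argument to compare yours against. Judged on its own, your outline is correct and is essentially the standard proof: the change-of-variables identity for $s^+$, the computation $\mathrm{rep}_{s^+_\rho\mathcal{F}_0}(x)=\{\tau:\rho^{-1}\circ\tau\in\mathrm{rep}_{\mathcal{F}_0}(x)\}$, agreement with $\mathrm{rep}_{\mathcal{F}_1}$ via the semantic fact $\varphi^{\mathcal{M}_1}=\rho\circ\varphi^{\mathcal{M}_0}$, and the uniqueness half of Theorem~\ref{th1} (or, more cheaply, the observation that $\mathcal{F}$ is recoverable from $\mathrm{rep}_{\mathcal{F}}$ by testing whether $Id\in\mathrm{rep}_{\mathcal{F}}(x)$, since $s^+_{Id}x=x$) to get $(1)\Rightarrow(2)$; atomic formulas plus bijectivity of $\rho$ give $(2)\Rightarrow(1)$.

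Two points you should pin down when writing this up. First, the composition law in set algebras is $s_\sigma\circ s_\tau=s_{\sigma\circ\tau}$ (since $s_\tau X=\{q:q\circ\tau\in X\}$), so your displayed identity should read $s^+_\tau s^+_\rho x=s^+_{\tau\circ\rho}x$, and the inverse you actually apply is $s^+_{\rho^{-1}}$; you hedged on the convention, but the whole argument lives or dies on getting this order consistent, including the verification that $s^+_\rho$ preserves the Henkin and diagonal-free conditions defining $\mathcal{H}'$ (which uses $s^+_{\rho^{-1}}c_ix=c_{\rho^{-1}(i)}s^+_{\rho^{-1}}x$ and $s^+_{\rho^{-1}}d_{ij}=d_{\rho^{-1}(i)\rho^{-1}(j)}$). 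Second, both of your directions silently read the hypothesis ``$\mathrm{rep}_{\mathcal{F}_i}$ is a homomorphism onto $Cs(\mathcal{M}_i)$'' as saying that $\mathrm{rep}_{\mathcal{F}_i}$ is the \emph{canonical} meaning function $\varphi/\!\equiv_T\;\mapsto\varphi^{\mathcal{M}_i}$; surjectivity alone does not force this, and the equivalence with ``$\rho$ is an isomorphism'' is false without it, so state that reading explicitly (it is the intended one in \cite{Sagi}). With those two items fixed, the proof goes through.
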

These last two theorems allow us to study models and count them via corresponding ultrafilters. This approach was initiated by S\'agi.
The main advantage of such an approach is that results proved
for locally finite cylindric algebras transfer {\it mutatis mutandis} to quasi-polyadic algebras (without diagonal elements).
So from the algebraic point of view we do the difficult task only once,
but from the model theoretic point of view we
obtain deep theorems for first order logic {\it without} equality, as well, which are more often than not, not obvious to prove without
the process of algebraisation. In fact, our main result here has an easy metalogical proof when we have equality (see the concluding remark),
but this proof does not work  in the absence of equality.
However, the algebraic proof does.
Vaught's conjecture has been confirmed when we restrict the action on certain subgroups of $G$.
But in this  case there might be isomorphic models that the group $G$ does not 'see' (the isomorphism witnessing this
can be outside $G$) so the equivalence relation is drastically different.

\begin{theorem} Let $G\subseteq S_{\infty}$ be a cli group. Then $|{\cal H}(\A)/E_G|\leq \omega$ or $|{\cal H}(\A)/E_G|=2^{\omega}$
\end{theorem}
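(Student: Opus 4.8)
The plan is to view $\mathcal{H}(\A)$ as a Polish space on which the cli group $G$ acts continuously, and to recognize $E_G$ (the orbit equivalence relation induced by the natural action $x \mapsto s^+_\rho x$ for $\rho \in G$) as an analytic — indeed, since $G$ is cli, a particularly well-behaved — equivalence relation. First I would make the action precise: by Theorem~\ref{th1} each $\mathcal F \in \mathcal{H}(\A)$ corresponds to a surjective homomorphism $\mathrm{rep}_{\mathcal F}$ onto a regular representable algebra with base $\omega$, and for $\rho \in S_\infty$ the set-image $s^+_\rho \mathcal F$ is again in $\mathcal{H}(\A)$; one checks that $(\rho,\mathcal F)\mapsto s^+_\rho\mathcal F$ is a continuous action of $S_\infty$ on $\mathcal{H}(\A)$, whose restriction to the closed subgroup $G$ is then a Borel (continuous) action of a Polish group. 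The equivalence classes of $E_G$ are exactly the $G$-orbits.

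Next I would invoke the structure theory for orbit equivalence relations of cli Polish groups. The key external input is that for a cli group $G$, every Borel action on a Polish space has orbit equivalence relation that is \emph{idealistic} and Borel-reducible to a particularly tame class — in particular, by the Becker–Kechris / Hjorth–Solecki line of results, $E_G$ for $G$ cli satisfies the topological Vaught conjecture: it has either countably many or exactly $2^{\omega}$ classes, with no intermediate possibility, and moreover this is witnessed by a perfect set of pairwise inequivalent points. Concretely, I would either (a) cite the theorem that cli actions admit Borel orbit-selecting structure so that a Silver/Burgess-style dichotomy applies to $E_G$ restricted to the Polish space $\mathcal{H}(\A)$, giving the $\le\omega$ or $2^\omega$ split directly; or (b) reduce to the case already handled in the literature by Sagi–Sziráki or the cited work \cite{sep}, where the full symmetric group case is the Vaught-conjecture-type statement and the cli case is the established positive instance.

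The remaining work is to carry out the dichotomy \emph{inside} the $G_\delta$ set $\mathcal{H}(\A)$ rather than on all of $\A^*$: since $\mathcal{H}(\A)$ is itself Polish (noted in the excerpt, being a nonempty dense $G_\delta$) and $G$-invariant, the action restricts, and the dichotomy theorem applies verbatim to this restricted Polish $G$-space. If $E_G$ has more than $\omega$ classes, the perfect-set phase of the argument produces a Cantor set $C \subseteq \mathcal{H}(\A)$ of pairwise $E_G$-inequivalent ultrafilters, whence $|\mathcal{H}(\A)/E_G| = 2^\omega$; otherwise it is at most $\omega$. I would also remark that the case $G = S_\infty$ is \emph{not} covered, precisely because $S_\infty$ is not cli — this is where the genuine Vaught conjecture would live — so the hypothesis on $G$ is exactly what makes the argument go through.

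I expect the main obstacle to be the careful verification that the abstract descriptive-set-theoretic dichotomy for cli actions transfers without loss to the specific space $\mathcal{H}(\A)$ and the specific substitution action $s^+_\rho$: one must confirm continuity (not merely Borelness) of the action, check that $\mathcal{H}(\A)$ is genuinely invariant and Polish in the subspace topology, and ensure the orbit equivalence relation we call $E_G$ coincides with the one the cited dichotomy addresses. Once those identifications are in place, the dichotomy is a black box and the conclusion is immediate.
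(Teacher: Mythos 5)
Your proposal is correct and follows essentially the same route as the paper: both arguments treat $\mathcal{H}(\A)$ as an invariant Polish subspace of the Stone space carrying the substitution action of the cli group $G$, and then invoke, as a black box, the known result (Becker--Kechris/Hjorth--Solecki) that orbit equivalence relations of cli Polish group actions satisfy the Glimm--Effros dichotomy, hence the Silver--Vaught dichotomy on the number of classes. Your version is in fact more careful than the paper's two-sentence proof, since you explicitly flag the need to verify continuity of the action, the $G$-invariance and Polishness of $\mathcal{H}(\A)$, and the identification of $E_G$ with the orbit equivalence relation addressed by the cited dichotomy.
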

\begin{demo}{Proof}  It is known that the number of orbits of $E_G$  satisfies the so-called Glimm Effros Dichotomy.
By known results in the literature on the topological version of
Vaught's conjecture, we have ${\cal H}(\A)/E_G$
is either at most countable or ${\cal H}(\A)/E_G$ contains continuum many non equivalent elements
(i.e non-isomorphic models).
\end{demo}
It is known that the number of orbits of $E=E_{S_{\infty}}$ {\it does not} satisfy the Glimm Effros Dichotomy.
We note that cli groups cover all natural extensions of abelian groups, like nilpotent and solvable groups.
Now we give a topological condition that implies Vaught's conjecture.
Let everything be as above with $G$ denoting a Polish subgroup of $S_{\infty}$.
Give ${\cal H}(\A)/E_G$ the qoutient topology and let $\pi: {\cal H}(\A)\to {\cal H}(\A)/E_G$ be the projection map.
$\pi$ of course depends on $G$, we somtimes denote it by $\pi_G$ to emphasize the dependence.


\begin{lemma} $\pi$ is open.
\end{lemma}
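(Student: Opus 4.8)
The plan is to show that the projection map $\pi:\mathcal H(\A)\to\mathcal H(\A)/E_G$ carries open sets of $\mathcal H(\A)$ to open sets in the quotient topology. By the definition of the quotient topology, a set $V\subseteq\mathcal H(\A)/E_G$ is open if and only if $\pi^{-1}(V)$ is open in $\mathcal H(\A)$. So, given an open $U\subseteq\mathcal H(\A)$, I must show $\pi^{-1}(\pi(U))$ is open. The standard observation is that
$$\pi^{-1}(\pi(U))=\bigcup_{g\in G}g\cdot U=\{\,g\cdot\mathcal F:g\in G,\ \mathcal F\in U\,\},$$
that is, $\pi^{-1}(\pi(U))$ is the $G$-saturation of $U$. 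Here $g\cdot\mathcal F$ denotes the natural action of $g\in G\subseteq S_\infty$ on $\mathcal H(\A)$ induced via $\mathcal F\mapsto s^+_g\mathcal F$; recall from Theorem~1.2 that this action corresponds, on the model side, to relabelling the base $\omega$ by $g$, and in particular it maps $\mathcal H(\A)$ into itself.

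First I would verify the displayed set equality: $\mathcal F'\in\pi^{-1}(\pi(U))$ iff $\pi(\mathcal F')=\pi(\mathcal F)$ for some $\mathcal F\in U$ iff $\mathcal F'\mathrel{E_G}\mathcal F$ for some $\mathcal F\in U$ iff $\mathcal F'=g\cdot\mathcal F$ for some $g\in G$, $\mathcal F\in U$; this is immediate from the definition of $E_G$ as the orbit equivalence relation of the $G$-action. Next, the key point: for each \emph{fixed} $g\in G$, the map $\mathcal F\mapsto g\cdot\mathcal F$ is a homeomorphism of $\mathcal H(\A)$ onto itself. Indeed it is a bijection with inverse $\mathcal F\mapsto g^{-1}\cdot\mathcal F$, and it is continuous because the Stone-topology basic open sets $N_x\cap\mathcal H(\A)$ pull back along it to $N_{s^+_{g^{-1}}x}\cap\mathcal H(\A)$, which is again basic open; continuity of the inverse follows symmetrically. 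Hence each $g\cdot U$ is open in $\mathcal H(\A)$, and therefore $\pi^{-1}(\pi(U))=\bigcup_{g\in G}g\cdot U$ is open, being a union of open sets. Thus $\pi(U)$ is open in the quotient, and $\pi$ is an open map.

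**The main obstacle**, such as it is, is purely bookkeeping: one must confirm that the action of $g\in G$ on ultrafilters is given by a well-defined homeomorphism of $\mathcal H(\A)$ (and, in the cylindric case, of $\mathcal H'(\A)$) — in particular that $s^+_g$ permutes $\mathcal H(\A)$, which follows from Theorem~1.2 together with the fact that $g$ is a bijection of $\omega$, and that it is a Boolean-algebra automorphism of $\A$ so that its dual is a homeomorphism of $\A^*$ restricting to $\mathcal H(\A)$. No properties of $G$ beyond being a subgroup of $S_\infty$ acting continuously are needed here; openness of $\pi$ is a general feature of quotients by group actions by homeomorphisms and does not require $G$ to be Polish or cli. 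I would remark that the only genuinely delicate input — already recorded in the excerpt — is that $\mathcal H(\A)$ is a nonempty $G_\delta$, hence Polish, so that the quotient construction is being performed in the category where the Vaught-type dichotomy results apply.
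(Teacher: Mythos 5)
Your proof is correct and is essentially the paper's argument: both reduce to showing that the $G$-saturation $\pi^{-1}(\pi(U))$ is a union of $G$-translates of $U$, and both rest on the same identity $a\in s^+_{\rho^{-1}}\mathcal F \Leftrightarrow s^+_\rho a\in\mathcal F$, which shows the translate of a basic open set $N_a$ is the basic open set $N_{s^+_\rho a}$. The paper simply specializes at the outset to basic open sets $N_a$ and computes $\pi^{-1}(\pi(N_a))=\bigcup_{\rho\in G}N_{s^+_\rho a}$ directly, whereas you phrase the same computation as saying each $g\in G$ acts by a homeomorphism.
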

\begin{proof}
To show that $\pi$ is open it is enough to show for arbitrary $a\in \A$ that $\pi^{-1}(\pi(N_a))$ is open. For,
\begin{align*}
\pi^{-1}(\pi(N_a))&=\{F\in\A^*:(\exists F'\in N_a)(F,F')\in E\}\\
&=\{F\in\A^*:(\exists F'\in N_a)(\exists \rho\in G)s^+_\rho F'=F\}\\
&=\{F\in\A^*:(\exists F'\in N_a)(\exists \rho\in G)F'=s^+_{\rho^{-1} }F\}\\
&=\{F\in\A^*:(\exists \rho\in G)s^+_{\rho^{-1} }F\in N_a\}\\
&=\{F\in\A^*:(\exists \rho\in G)a\in s^+_{\rho^{-1} }F\}\\
&=\{F\in\A^*:(\exists \rho\in G)s^+_{\rho }a\in F\}\\
&=\bigcup_{\rho\in G}N_{s^+_\rho a}
\end{align*}
\end{proof}

\begin{theorem} If $\pi$ is closed, then Vaught's conjecture holds.
\end{theorem}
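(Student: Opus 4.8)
The plan is to exploit the dichotomy theorem of Becker--Kechris / Effros for Polish group actions: if $G$ is a Polish group acting continuously on a Polish space $X$, and the quotient map $X \to X/G$ (equivalently, the orbit equivalence relation $E_G$) is such that every orbit is $G_\delta$ — or, what amounts to the same via the Effros theorem, such that $\pi$ is closed — then the action is ``smooth''/the orbit space is nicely behaved, and in particular the number of orbits is either countable or $2^{\aleph_0}$. Here $X = {\cal H}(\A)$ is Polish by the remark following Theorem~\ref{th1}, the group $G \subseteq S_\infty$ is Polish, and $E_G$ is the orbit equivalence relation of the action $\rho \cdot F = s^+_\rho F$ (which is a continuous action — this is essentially the content of the previous lemma, showing the maps $F \mapsto s^+_\rho F$ are continuous, indeed homeomorphisms). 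So the statement to prove is: if $\pi_G$ is closed, then $|{\cal H}(\A)/E_G| \le \omega$ or $|{\cal H}(\A)/E_G| = 2^\omega$.

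First I would recall the Effros dichotomy in the form most convenient here: for a Polish group $G$ acting continuously on a Polish space $X$, and a point $x \in X$, the orbit $G \cdot x$ is $G_\delta$ in $X$ if and only if it is \emph{locally closed} (i.e.\ relatively open in its closure) if and only if the orbit map $G \to G\cdot x$ is open if and only if the induced map $G/G_x \to G\cdot x$ is a homeomorphism. I would then argue that $\pi$ being closed forces every orbit to be closed (the preimage under the projection of the singleton $\{\pi(F)\}$, which is closed in the quotient since points in $X/G$ are closed when $\pi$ is closed — or more directly: $\pi$ closed $\Rightarrow$ $\pi^{-1}(\pi(\{F\}))= G\cdot F$ is the preimage of a point, and closedness of $\pi$ plus the fact that $\pi$ is a quotient map gives that each $\pi^{-1}(\text{point})$ is closed). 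In particular each orbit is closed, hence $G_\delta$, hence by Effros the action is smooth in the relevant sense, and the Becker--Kechris analysis of Borel $G$-actions (or Silver's theorem applied to the Borel equivalence relation $E_G$, whose classes are now all $G_\delta$ hence Borel, making $E_G$ itself Borel) gives the Glimm--Effros/Silver dichotomy: $E_G$ has either countably many or perfectly many classes. That is exactly the conclusion.

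Concretely the key steps, in order: (i) identify ${\cal H}(\A)$ as a Polish space and the $s^+_\rho$-action of $G$ on it as a continuous (indeed Borel) action with orbit equivalence relation $E_G$; (ii) show that closedness of $\pi_G$ implies every $E_G$-class is closed in ${\cal H}(\A)$, hence $G_\delta$; (iii) invoke the Effros theorem to conclude the action is ``smooth'' / each orbit is locally closed, so $E_G$ is a Borel equivalence relation all of whose classes are Borel; (iv) apply Silver's theorem (or the Glimm--Effros dichotomy for such relations) to conclude $|{\cal H}(\A)/E_G|$ is $\le \omega$ or $= 2^\omega$. Steps (i) and (ii) are bookkeeping given the earlier lemma; step (iv) is a black-box citation.

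The main obstacle, and the step deserving the most care, is step (ii): extracting from ``$\pi$ is a closed map'' the statement that \emph{individual orbits} are closed subsets of ${\cal H}(\A)$. One must be careful because the quotient ${\cal H}(\A)/E_G$ need not be $T_1$ a priori — however, if $\pi$ is closed then for each $F$ the set $\pi^{-1}(\pi(F))=G\cdot F$ is closed (its complement $\{F' : \pi(F')\neq\pi(F)\}$ is $\pi^{-1}$ of the complement of a point, and one checks using closedness that points are closed downstairs, or one argues directly that $G \cdot F = \bigcap$ of preimages of opens or uses that $\pi$ closed $+$ $\pi$ continuous surjective quotient $\Rightarrow$ $\pi$ is a closed quotient map so fibers are closed). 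Once orbits are closed the rest is standard descriptive set theory. A secondary subtlety is making sure the action is genuinely Borel (so that Silver's theorem applies): since each $s^+_\rho$ is a homeomorphism of $\A^*$ and the action map $(\rho, F)\mapsto s^+_\rho F$ is continuous in $F$ for fixed $\rho$ and the group $S_\infty$ acts in the expected pointwise-on-$\omega$ way, joint Borel measurability follows from standard arguments about actions of $S_\infty$ on Polish spaces; this should be remarked but not belaboured.
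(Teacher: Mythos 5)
Your proposal is correct in its essentials but takes a genuinely different route from the paper's. The paper argues downstairs, on the quotient: openness of $\pi$ (the preceding lemma) gives second countability of ${\cal H}(\A)/E_G$, closedness gives normality (and, implicitly, the $T_1$ separation needed for Urysohn metrization), so the quotient is separable metrizable; being a continuous image of the Polish space ${\cal H}(\A)$ it is then analytic, and the perfect set property yields countable-or-continuum. You instead stay upstairs: from closedness of $\pi$ you extract that every orbit is closed -- and your step (ii), the only delicate point, is right and can be said in one line: ${\cal H}(\A)$ is metrizable, so $\{F\}$ is closed, so $\{\pi(F)\}$ is closed because $\pi$ is closed, so $G\cdot F=\pi^{-1}(\{\pi(F)\})$ is closed by continuity of $\pi$ -- and then you run standard invariant descriptive set theory (Effros, smoothness, the dichotomy). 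This buys a cleaner argument that bypasses the paper's metrization step; indeed, once the orbits are closed you do not even need the full Effros theorem: the saturations $G\cdot U_n$ of a countable basis are invariant open sets forming a countable Borel separating family (two points lying in the same saturated basic opens have the same orbit closure, hence, orbits being closed, the same orbit), so $E_G$ is smooth outright and the image of the reducing map is an analytic subset of $2^\omega$, hence countable or of size $2^{\aleph_0}$. Two side remarks in your write-up should be repaired, though neither affects the main line: closedness of $\pi$ is strictly stronger than all orbits being $G_\delta$, so these are not ``the same via Effros''; and ``all classes Borel'' does not by itself make $E_G$ Borel (the orbit equivalence relation is a priori only analytic) -- you need smoothness first, which exhibits $E_G$ as the preimage of the diagonal under a Borel map, before Silver's theorem becomes applicable.
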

\begin{demo}{Proof} We have ${\cal H}(\A)$ is Borel subset of $\A^*$, the Stone space of $\A$ and ${\cal H}(\A)/E_G$ is a continuous image of
${\cal H}(\A)$.
Because $\pi$ is open, $H(\A)/E$  is second countable.
Now, since $H(\A)$ is metrizable and second countable, it is normal.
But $\pi$ is closed, and so $H(\A)/E$ is also normal, hence regular. Thus ${\cal H}(\A)/E_G$
can also be embedded as an open set in $\mathbb{R}^{\omega},$ hence it is Polish.
If ${\cal H}(\A)/E_G$ is uncountable, then being the continuous image under a map between two Polish spaces of a Borel set, it is analytic.
Then it has the power of the continuum.
\end{demo}

\section{Number of distinguishable models}
In this section we define an equivalence relation on ultrafilters and show that it is Borel.
This implies that it satisfies the Glimm-Effros dichotomy, and so has either countably many or else continuum many equivalence classes.
The equivalence relation we introduce  corresponds to a non-trivial equivalence relation between models which is weaker than isomorphism.
\begin{definition}[Notation]
Let $\mathcal{F}$ be an ultrafilter of a locally finite (cylindric or quasi-polyadic) algebra $\A$.
For $a\in A$ define $$Sat_\mathcal{F}(a)=\{t|_{\Delta a}: t\in {}^{\omega}\omega,\;s^+_t a\in\mathcal{F}\}.$$
\end{definition}

Throughout, $\A$ is countable. We define an equivalence relation $\mathcal{E}$ on the space $\mathcal{H}'(\A)$ (or $\mathcal{H}(\A))$  that turns out to be Borel.

\begin{definition}
Let $\mathcal{E}$ be the following equivalence relation on $\mathcal{H}'(\A)$ (or $\mathcal{H}(\A))$ :\small
$$\mathcal{E}=\{(\mathcal{F}_0,\mathcal{F}_1): (\forall a\in A) (|Sat_{\mathcal{F}_0}(a)|=|Sat_{\mathcal{F}_1}(a)|)\}.$$
\end{definition}

We say that $\mathcal{F}_0,\mathcal{F}_1\in\mathcal{H}'(\A)$(or $\mathcal{H}(\A))$ are \emph{distinguishable } if $(\mathcal{F}_0,\mathcal{F}_1)\notin\mathcal{E}.$ We also say that two models of a theory $T$ are distinguishable if their corresponding ultrafilters in $\mathcal{H}'(CA(T))$(or $\mathcal{H}(QPA(T)))$ are distinguishable. That is, two models are distinguishable if they disagree in the number of realizations they have for some formula.

To show that $\mathcal{E}$ is Borel in the product space $\mathcal{H}'(\A)\times\mathcal{H}'(\A)$(or in $\mathcal{H}(\A)\times\mathcal{H}(\A))$,
we need first to develop some tools that enable us to express \textbf{appropriately} sentences talking about sets. Namely, sentences like ``the following two sets $X,Y$ are of the same size".

Let $K$ be the set of all functions from a finite subset of $\omega$ to $\omega$ and let $\mu$ be a bijection between $\mathbb{N}$ and $K$. Then we can easily see that for a set $X\subset K$:
$$X\mbox{ is infinite \textit{iff}  } (\forall n)(\exists m>n) \mu(m)\in X.$$
Suppose now for $X,Y$ subsets of $K$, we want to say that $|X|=|Y|<\omega$. (Notice that, $|X|=n$, is equivalent to that there is an injective map $f:n\longrightarrow K$ such that $f^*(n)=X$. We show that the following sentence says what we want: \textit{There is } $n<\omega$ \textit{ and there are injective maps} $f,g:n\longrightarrow K$ \textit{such} \textit{that} $f^*(g^{-1}(Y))=X$ and $g^*(f^{-1}(X))=Y$. Indeed, let $Inj(n,K)$ denote the set of all injections from $n$ into $K$. Then,

\begin{align*}
 |X|=|Y|<\omega &\Longleftrightarrow (\exists n)(\exists f,g\in Inj(n,K))(f^*(n)=X\wedge g^*(n)=Y)\\
  &\Longrightarrow (\exists n)(\exists f,g\in Inj(n,K))(f^*(g^{-1}(Y))=X\wedge \\&g^*(f^{-1}(X))=Y) \\
 &\Longrightarrow (\exists n)(\exists f,g\in Inj(n,K))(X\subseteq f^*(n)\wedge\\& Y\subseteq g^*(n)\wedge g^{-1}(Y)=f^{-1}(X))\\
 &\Longrightarrow (\exists n)(\exists f,g\in Inj(n,K))(|X|=|f^{-1}(X)|\wedge\\& |Y|=|g^{-1}(Y)|\wedge |g^{-1}(Y)|=|f^{-1}(X)|)\\
 &\Longrightarrow |X|=|Y|<\omega.
\end{align*}

For $h$, an injective map from a subset of $\omega$ into $\omega$, let $h^{-1}$ denote also the map from $Range(h)$ to $\omega$ that sends $t\in Range(h)$ to the unique element in $h^{-1}(\{t\})$.
Remark that, for $f$ and $g$ like above, because they are injective we have:
\begin{align*}
f^*(g^{-1}(Y))=X &\Longleftrightarrow (\forall t)[t\in X\Leftrightarrow (\exists s)(s\in Y\wedge f(g^{-1}(s))=t)]\\
&\Longleftrightarrow (\forall t)[t\in X\Leftrightarrow (\exists s)(s\in Y\wedge g^{-1}(s)=f^{-1}(t))]\\
&\Longleftrightarrow (\forall t)[t\in X\Leftrightarrow (\exists s)(s\in Y\wedge s=g(f^{-1}(t)))]\\
&\Longleftrightarrow (\forall t)[t\in X\Leftrightarrow g(f^{-1}(t))\in Y].
\end{align*}

Now we carry out a direct usage of the above tools to see that  $\mathcal{E}$ is Borel. In what follows, let $X_a,Y_a$ abbreviate $Sat_{\mathcal{F}_0}(a),Sat_{\mathcal{F}_1}(a)$ respectively and let $N_a$ abbreviate $N_a\cap\mathcal{H}'(\A)$ (or $N_a\cap\mathcal{H}(\A)).$

\begin{align*}
 \mathcal{E} &= \{(\mathcal{F}_0,\mathcal{F}_1): (\forall a\in A) |X_a|=|Y_a|\}\\
  &= \bigcap_{a\in A}\{(\mathcal{F}_0,\mathcal{F}_1): |X_a|=|Y_a|\}\\
  &=\bigcap_{a\in A}[\{(\mathcal{F}_0,\mathcal{F}_1): |X_a|=|Y_a|<\omega\}\cup  \{(\mathcal{F}_0,\mathcal{F}_1): |X_a|,|Y_a|\text{ are both infinite}\}]\\
  &=\bigcap_{a\in A}[\{(\mathcal{F}_0,\mathcal{F}_1):(\exists n)(\exists f,g\in Inj(n,K))(f^*(g^{-1}(Y_a))=X_a\wedge\\& g^*(f^{-1}(X_a))=Y_a)\}\cup\{(\mathcal{F}_0,\mathcal{F}_1):(\forall n)(\exists m,k>n)(\mu(m)\in X_a\wedge \mu(k)\in Y_a )\}]\\
  &=\bigcap_{a\in A}[\bigcup_{n<\omega}\bigcup_{f,g\in Inj(n,K)}\{(\mathcal{F}_0,\mathcal{F}_1):(f^*(g^{-1}(Y_a))=X_a\wedge g^*(f^{-1}(X_a))=Y_a)\}\cup\\ &\bigcap_n\bigcup_{m,k>n}\{(\mathcal{F}_0,\mathcal{F}_1):(\mu(m)\in X_a\wedge \mu(k)\in Y_a )\}]\\
      \end{align*}
  \begin{align*}
  &=\bigcap_{a\in A}[\bigcup_{n<\omega}\bigcup_{f,g\in Inj(n,K)}\{(\mathcal{F}_0,\mathcal{F}_1):(\forall t\in K)(t\in X_a \Leftrightarrow g( f^{-1}(t))\in Y_a)\wedge (\forall t\in K)\\&(t\in Y_a \Leftrightarrow f( g^{-1}(t))\in X_a)\}\cup \bigcap_n\bigcup_{m,k>n}\{(\mathcal{F}_0,\mathcal{F}_1):s_{\mu(m)}a\in\mathcal{F}_0 \wedge s_{\mu(k)}a\in\mathcal{F}_1 \}]\\
  &=\bigcap_{a\in A}[\bigcup_{n<\omega}\bigcup_{f,g\in Inj(n,K)}\bigcap_{t\in K}\{(\mathcal{F}_0,\mathcal{F}_1):(t\in X_a \Leftrightarrow g( f^{-1}(t))\in Y_a)\wedge(t\in Y_a \Leftrightarrow\\& f( g^{-1}(t))\in X_a)\}\cup\bigcap_n\bigcup_{m,k>n}\{(\mathcal{F}_0,\mathcal{F}_1):\mathcal{F}_0 \in N_{s_{\mu(m)}a}\wedge \mathcal{F}_1 \in N_{s_{\mu(k)}a} \}]\\
  &=\bigcap_{a\in A}[\bigcup_{n<\omega}\bigcup_{f,g\in Inj(n,K)}\bigcap_{t\in K}\{(\mathcal{F}_0,\mathcal{F}_1):(s_ta\in\mathcal{F}_0 \Leftrightarrow s_{g( f^{-1}(t))}a\in \mathcal{F}_1)\wedge(s_ta\in\mathcal{F}_1  \Leftrightarrow\\& s_{f( g^{-1}(t))}a\in \mathcal{F}_0)\})\cup\bigcap_n\bigcup_{m,k>n}(N_{s_{\mu(m)}a}\times N_{s_{\mu(k)}a})]\\
  &=\bigcap_{a\in A}[\bigcup_{n<\omega}\bigcup_{f,g\in Inj(n,K)}\bigcap_{t\in K}(\{(\mathcal{F}_0,\mathcal{F}_1):s_ta\in\mathcal{F}_0 \Leftrightarrow s_{g( f^{-1}(t))}a\in \mathcal{F}_1\}\cap\\&\{(\mathcal{F}_0,\mathcal{F}_1):s_ta\in\mathcal{F}_1  \Leftrightarrow s_{f( g^{-1}(t))}a\in \mathcal{F}_0)\})\cup\bigcap_n\bigcup_{m,k>n}(N_{s_{\mu(m)}a}\times N_{s_{\mu(k)}a})]\\
  &=\bigcap_{a\in A}[\bigcup_{n<\omega}\bigcup_{f,g\in Inj(n,K)}\bigcap_{t\in K}(\{(\mathcal{F}_0,\mathcal{F}_1):(s_ta\in\mathcal{F}_0 \wedge s_{g( f^{-1}(t))}a\in \mathcal{F}_1)\vee(s_ta\notin\mathcal{F}_0 \wedge\\& s_{g( f^{-1}(t))}a\notin \mathcal{F}_1)\}\cap\{(\mathcal{F}_0,\mathcal{F}_1):(s_ta\in\mathcal{F}_1  \wedge s_{f( g^{-1}(t))}a\in \mathcal{F}_0)\vee \\&(s_ta\notin\mathcal{F}_1  \wedge s_{f( g^{-1}(t))}a\notin \mathcal{F}_0)\}) \cup\bigcap_n\bigcup_{m,k>n}(N_{s_{\mu(m)}a}\times N_{s_{\mu(k)}a})]\\
  &=\bigcap_{a\in A}[\bigcup_{n<\omega}\bigcup_{f,g\in Inj(n,K)}\bigcap_{t\in K}(((N_{s_ta}\times N_{s_{g(f^{-1}(t))}a})\cup (N_{-s_t(a)}\times N_{-s_{g(f^{-1}(t))}a}))\cap \\&((N_{s_{f(g^{-1}(t))}a}\times N_{s_ta})\cup  (N_{-s_{f(g^{-1}(t))}a}\times N_{-s_ta}))) \cup\bigcap_n\bigcup_{m,k>n}(N_{s_{\mu(m)}a}\times\\& N_{s_{\mu(k)}a})].
\end{align*}

So, $\mathcal{E}$ is Borel. Recall now the following:

If $X$ be a Polish space and $E$ a Borel equivalence relation on $X$. We call $E$ \emph{smooth }if there is a Borel map $f$ from $X$ to the Cantor space ${}^\omega2$ such that $$xEy \Leftrightarrow f(x)=f(y).$$
Note that $E$ is smooth iff $E$ admits a \emph{countable Borel separating family}, i.e., a family $(A_n)$ of Borel sets such that $$xEy\Leftrightarrow \forall n(x\in A_n \leftrightarrow y\in A_n).$$
Clearly, if $E$ is smooth then it is Borel (but the converse is not true).

A standard example of a non-smooth Borel equivalence relation is the following: On $2^\mathbb{N}$, let $E_0$ be defined by $$xE_0y\Leftrightarrow \exists n\forall m\geq n (x(m)=y(m)).$$

We say that the equivalence relation $E$, on a Polish space $X$, satisfies the \emph{Glimm-Effros Dichotomy} if either it is smooth or else it contains a copy of $E_0$. Clearly, for an equivalence relation $E$, $E$ satisfies the Glimm-Effros Dichotomy implies that $E$ satisfies the \emph{Silver-Vaught Dichotomy}, that is, $E$ has either countably many  classes or else perfectly many classes ($X$ has a perfect subset of non-equivalent elements).

\begin{theorem}[Harrington-Kechris-Louveau \cite{HKL}]
Let $X$ be a Polish space and $E$ a Borel equivalence relation on $X$. Then $E$ satisfies the Glimm-Effros Dichotomy.
\end{theorem}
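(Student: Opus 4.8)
This is a deep theorem of (effective) descriptive set theory, and rather than attempting a new argument I would follow the original proof of Harrington, Kechris and Louveau. The plan has three stages: (1) reduce to the lightface setting; (2) split into a ``smooth part'' and its complement; (3) on the complement, build a continuous copy of $E_0$ inside $E$ by a Cantor scheme in the Gandy--Harrington topology. For stage (1): since $E$ is Borel it is $\Delta^1_1(z)$ for some parameter $z\in{}^\omega\omega$, so after relativising all effective notions to $z$ I may assume $E$ is lightface $\Delta^1_1$ and, via a $\Delta^1_1$ isomorphism, that $X={}^\omega\omega$ with its standard complete metric $d$. I would then bring in the Gandy--Harrington topology $\tau$ on $X$, whose basic open sets are the nonempty $\Sigma^1_1$ subsets; the two facts used throughout are that $\tau$ refines the Polish topology and that $\tau$ is a strong Choquet space (restricting, if necessary, to the $\Sigma^1_1$ set $\{x:\omega_1^x=\omega_1^{CK}\}$), so that a decreasing sequence of nonempty $\Sigma^1_1$ sets, chosen by a strong-Choquet strategy and with $d$-diameters tending to $0$, has exactly one common point.

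Stage (2) is the case split. Put
$$O=\bigcup\{A : A\text{ is }\Sigma^1_1\text{ and }E\restriction A\text{ is smooth}\}.$$
By $\Sigma^1_1$-reflection (read off $\Sigma^1_1$ codes, the displayed condition is ``$\Sigma^1_1$ on $\Sigma^1_1$'') the set $O$ is $\Sigma^1_1$; amalgamating the separating families attached to the codes shows $E\restriction O$ is smooth; and every $\Delta^1_1$ point lies in $O$. If $O=X$ then $E$ is smooth, which is the first horn (and in particular gives a countable Borel separating family). Otherwise $E$ is not smooth, and the substantive step is a fusion/exhaustion argument, carried out with $\tau$-open ($=\Sigma^1_1$) sets, producing a nonempty $\Sigma^1_1$ set $Z\subseteq X\setminus O$ on which $E$ is $\tau$-\emph{ergodic}: every $(E\restriction Z)$-invariant subset of $Z$ with the Baire property in $\tau\restriction Z$ is $\tau$-meager or $\tau$-comeager in $Z$. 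Since $E\restriction Z$ is not smooth, all its classes are $\tau$-meager; combined with ergodicity this upgrades to the key fact that $E$ is meager in $(Z\times Z,\tau\times\tau)$.

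Stage (3) builds the embedding of $E_0$. By recursion on $|s|$ one constructs a Cantor scheme of nonempty $\Sigma^1_1$ sets $(Z_s)_{s\in 2^{<\omega}}$ with $Z_\emptyset\subseteq Z$, $Z_{s^\frown i}\subseteq Z_s$, $d$-diameters tending to $0$ along each branch, together with partial $\Delta^1_1$ injections --- extracted from $\tau$-ergodicity --- that match $Z_{s^\frown 0}$ with $Z_{s^\frown 1}$ in an $E$-respecting way, so that flipping a single coordinate of a branch stays inside one $E$-class, while, using the $(\tau\times\tau)$-meagerness of $E$, one thins the $Z_s$ so that branches differing in infinitely many coordinates land in distinct $E$-classes. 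Letting $f(x)$ be the unique point of $\bigcap_n Z_{x\restriction n}$, the map $f:{}^\omega2\to X$ is continuous for the Polish topologies (because $\tau$ refines them), injective, and satisfies $x\,E_0\,y\Rightarrow f(x)\,E\,f(y)$ and $(x,y)\notin E_0\Rightarrow(f(x),f(y))\notin E$; thus $f$ is a continuous copy of $E_0$ inside $E$, the second horn. The two horns are mutually exclusive because $E_0$ is not smooth (a standard zero--one/category argument) and smoothness is downward closed under Borel reducibility. This establishes the Glimm--Effros Dichotomy for $E$, and hence the Silver--Vaught alternative.

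\smallskip
\noindent\textbf{Main obstacle.} The hard part is the final recursion: running a single construction that simultaneously keeps the ``diagonal'' branches $E$-related --- which forces the scheme to carry genuine $E$-preserving transformations between its $\Sigma^1_1$ pieces, drawn from $\tau$-ergodicity --- and forces ``generic'' off-diagonal pairs into distinct $E$-classes --- which needs the $(\tau\times\tau)$-meagerness of $E$ together with a diagonalisation ensuring that every eventually-distinct pair of branches escapes every relevant meager set --- all the while keeping each $Z_s$ a nonempty $\Sigma^1_1$ set so that the strong-Choquet and Baire-category machinery of the Gandy--Harrington topology remains available at every stage. The remaining ingredients (the effectivisation, the reflection argument giving $O$, the fusion producing $Z$) are comparatively routine bookkeeping with the effective separation, reflection and boundedness theorems.
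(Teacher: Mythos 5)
The paper does not prove this theorem at all: it is imported verbatim from Harrington--Kechris--Louveau \cite{HKL} and used as a black box. Your outline is a faithful and correctly structured summary of the original argument -- the relativisation to a lightface $\Delta^1_1$ relation, the Gandy--Harrington topology with its strong Choquet property, the $\Sigma^1_1$-reflection argument producing the maximal smooth part $O$, and the Cantor scheme of nonempty $\Sigma^1_1$ sets embedding $E_0$ into $E\restriction(X\setminus O)$ -- and you correctly isolate the final fusion/recursion as the genuinely hard step. As a proposal it is a plan rather than a proof (the ergodicity lemma and the simultaneous on-/off-diagonal control in the recursion are stated but not carried out), but nothing in it is wrong, and it is in any case far more than the paper itself supplies for this statement.
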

It follows directly from this theorem, replacing $X$ with $\mathcal{H}'(\A)$ (or $\mathcal{H}(\A)$), that $\mathcal{E}$ satisfies the Glimm-Effros dichotomy and so has either countably many equivalence classes or else perfectly many.

\begin{corollary}
Let $T$ be a first order theory  in a countable language (with or without equality). If  $T$ has an uncountable set of countable models that are pairwise distinguishable, then actually it has such a set of size $2^{\aleph_0}$.
\end{corollary}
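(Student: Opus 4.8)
The plan is to reduce the statement to the Glimm--Effros dichotomy for the Borel equivalence relation $\mathcal{E}$, applied to the algebra $\A=CA(T)$ when $T$ has equality and $\A=QPA(T)$ when it does not. Since the language of $T$ is countable, $\A$ is a countable, locally finite, $\omega$-dimensional cylindric (respectively quasi-polyadic) algebra, so everything developed above applies to it: $\mathcal{H}'(\A)$ (respectively $\mathcal{H}(\A)$) is a Polish space, $\mathcal{E}$ is a Borel equivalence relation on it, and hence, by the Harrington--Kechris--Louveau theorem, $\mathcal{E}$ satisfies the Glimm--Effros dichotomy and therefore the Silver--Vaught dichotomy: either $\mathcal{E}$ has at most countably many classes, or it has a perfect set of pairwise $\mathcal{E}$-inequivalent elements and so $2^{\aleph_0}$ classes.

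First I would translate the hypothesis into the language of ultrafilters. Let $\{\mathcal{M}_i:i\in I\}$ be an uncountable family of pairwise distinguishable countable models of $T$. Passing to isomorphic copies affects neither the number of realizations of any formula nor being a model of $T$, so we may assume each $\mathcal{M}_i$ has universe $\omega$. The natural map sending the class in $\A$ of a formula to its realization set in $\mathcal{M}_i$ is a homomorphism from $\A$ onto $Cs(\mathcal{M}_i)$ (respectively $Qs(\mathcal{M}_i)$), an element of $Lf_\omega\cap Cs_\omega^{reg}$ (respectively $LfQPA_\omega\cap Qs_\omega^{reg}$) with base $\omega$; so by Theorem \ref{th1} there is $\mathcal{F}_i\in\mathcal{H}'(\A)$ (respectively $\mathcal{H}(\A)$) with $\mathrm{rep}_{\mathcal{F}_i}$ equal to this homomorphism. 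Unwinding the definition, $|Sat_{\mathcal{F}_i}(a)|$ is precisely the number of realizations in $\mathcal{M}_i$ of the formula coded by $a$; hence $(\mathcal{F}_i,\mathcal{F}_j)\notin\mathcal{E}$ whenever $\mathcal{M}_i$ and $\mathcal{M}_j$ are distinguishable, and the family $\{\mathcal{F}_i:i\in I\}$ meets uncountably many $\mathcal{E}$-classes.

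The dichotomy above then forces $\mathcal{E}$ to have a perfect set $P\subseteq\mathcal{H}'(\A)$ (respectively $\mathcal{H}(\A)$) of pairwise $\mathcal{E}$-inequivalent ultrafilters, so $|P|=2^{\aleph_0}$, and it remains to return from ultrafilters to models. For $\mathcal{F}\in P$, Theorem \ref{th1} says $\mathrm{rep}_{\mathcal{F}}$ maps $\A$ onto a regular, locally finite, $\omega$-dimensional set algebra of base $\omega$; reading the interpretations of the relation symbols off the images of the atomic formulas produces a model $\mathcal{M}_{\mathcal{F}}$ of $T$ with universe $\omega$ whose associated set algebra is that image. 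The assignment $\mathcal{F}\mapsto\mathcal{M}_{\mathcal{F}}$ carries $\mathcal{E}$-inequivalent ultrafilters to distinguishable models, so $\{\mathcal{M}_{\mathcal{F}}:\mathcal{F}\in P\}$ is a family of $2^{\aleph_0}$ pairwise distinguishable countable models of $T$ (and, since a countable language admits only $2^{\aleph_0}$ countable structures up to isomorphism, this size is optimal).

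I expect the only delicate point to be the precise dictionary among three notions --- countable models of $T$ with universe $\omega$, homomorphisms from $\A$ onto regular locally finite set algebras of base $\omega$, and ultrafilters in $\mathcal{H}'(\A)$ (respectively $\mathcal{H}(\A)$) --- and the identification of $Sat_{\mathcal{F}}(a)$ with the realization set of the corresponding formula; once Theorem \ref{th1} is used in both directions this bookkeeping is routine, and the corollary is then immediate from the Glimm--Effros/Silver--Vaught dichotomy already established for $\mathcal{E}$. No completeness of $T$ is needed: if $T$ has no models the statement is vacuous, and the hypothesis that it has an uncountable family of models already implies consistency.
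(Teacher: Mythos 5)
Your proposal is correct and follows essentially the same route the paper intends: translate models to ultrafilters in $\mathcal{H}'(\A)$ (or $\mathcal{H}(\A)$) via Theorem \ref{th1}, identify distinguishability with $\mathcal{E}$-inequivalence, and invoke the Harrington--Kechris--Louveau dichotomy for the Borel relation $\mathcal{E}$ to get a perfect set of pairwise inequivalent classes. The paper states the corollary as an immediate consequence of the preceding development without writing out the model/ultrafilter bookkeeping; you have simply made that dictionary explicit.
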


\begin{remark}
It should be mentioned that, for languages with equality, our last result can be established with less effort.
Here is an argument. Suppose we have a language $L$ with equality.
First note that if  $L^*=L_0\cup L_1$ where $L_0$ and $L_1$ are disjoint copies of $L$,
then $X_{L^*}\cong X_{L_0}\times X_{L_1}$ (where the spaces $X_L$'s are defined as in \cite{BeckerKechris} page 22).

For each formula $\varphi,$ let $\varphi^*$ be the
sentence $\bigwedge_{n\in\omega}(\exists^n \bar{x})\varphi_0(\bar{x})\leftrightarrow (\exists^n \bar{x})\varphi_1(\bar{x})$
where $\varphi_0,$ $\varphi_1$ are the copies of $\varphi$ in $L_0,L_1$ respectively, and $\exists^n$ is a shorthand for
``there exists at least $n$ tuples such that ..."

It is then immediate that two models $M_0,M_1$ of $L$ are not distinguishable iff the model $M$ of $L^*$ such
that $M|_{L_0}=M_0$ and $M|_{L_1}=M_1$ satisfies $\bigwedge_{\varphi\in L}\varphi^*.$
This means that our equivalence relation between models corresponds to the subset of $X_{L^*}$ of models of
the formula $\bigwedge_{\varphi\in L}\varphi^*.$ Such a subset is Borel by Theorem 16.8 in \cite{Kechris}.
\end{remark}

\section{ Number of models omitting a given family of types}

The way we counted the ultrafilters (corresponding to distinguishable models)
above gives a completely analogous  result when we count ultrafilters corresponding  to models {\it omitting} a countable set of non-isolated types.

Given a countable locally finite algebra $\A$, a non-zero $a\in A$ and a non-principal type
$X\subseteq \Nr_n\A$, so that $\prod X=0$, one constructs a model omitting $X$, by finding a Henkin ultrafilter preserving
the following set of infinitary joins and meets where $x\in A$, $i,j\in \omega$ and $\tau$ is a finite transformation:
$c_ix=\sum s_j^ix,$
and $\prod s_{\tau}X=0.$
Working in the Stone space, one finds an ultrafilter in $N_a$ outside the
nowhere dense sets
$N_{i,x}=S\sim \bigcup N_{s_j^i}$ and
$H_{\tau}=\bigcap_{x\in X} N_{\tau}x.$
Now suppose we want to count the number of distinguishable models omitting a
family $\Gamma=\{\Gamma_i:i<\lambda\}$ ($\lambda<covK)$
of non-isolated types of $T$.

Then $$\mathbb{H}=
\mathcal{H}(CA(T))\mbox{(or }\mathcal{H}'(QPA(T)))\cap\sim  \bigcup_{i\in\lambda,\tau\in W}\bigcap_{\varphi\in \Gamma_i}N_{s_\tau (\varphi/\equiv_T)}$$
(where $W=\{\tau\in{}^\omega\omega:|i:\tau(i)\neq i|<\omega\}$) is clearly (by the above discussion)
the space of ultrafilters corresponding to models of $T$ omitting $\Gamma.$

But then by properties of $covK$ union  $\bigcup_{i\in\lambda}$
can be reduced to a countable union.
We then have $\mathbb{H}$ a $G_\delta$ subset of a Polish space. So $\mathbb{H}$ is
Polish and moreover, $\mathcal{E}'=\mathcal{E}\cap (\mathbb{H}\times \mathbb{H})$
is a Borel equivalence relation on $\mathbb{H}.$ It follows then that the number of distinguishable models omitting $\Gamma$
is either countable or else $2^\omega.$
We readily obtain:
\begin{corollary}
Let $T$ be a first order theory  in a countable language (with or without equality).
If  $T$ has an uncountable set of countable models that omit $< covK$ many non principal types that
are pairwise distinguishable, then actually it has such a set of size $2^{\aleph_0}$.
\end{corollary}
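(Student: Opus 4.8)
The strategy is to reduce the statement about models omitting a family $\Gamma = \{\Gamma_i : i < \lambda\}$ with $\lambda < \mathrm{cov}\,K$ to the already-established machinery: show that the relevant space of ultrafilters is still Polish, that $\mathcal{E}$ restricted to it is still Borel, and then invoke the Harrington--Kechris--Louveau theorem exactly as in the earlier Corollary. First I would pass from the theory $T$ to the algebra $\A = CA(T)$ (or $QPA(T)$ in the equality-free case), a countable locally finite $\omega$-dimensional algebra, and recall from the preceding discussion that a model of $T$ omitting a single non-principal type $\Gamma_i$ corresponds to a Henkin ultrafilter $\mathcal{F} \in \mathcal{H}'(\A)$ (or $\mathcal{H}(\A)$) lying outside the set $\bigcup_{\tau \in W}\bigcap_{\varphi \in \Gamma_i} N_{s_\tau(\varphi/\equiv_T)}$, where $W$ is the set of finitary transformations of $\omega$. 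Thus the ultrafilters corresponding to models of $T$ omitting all of $\Gamma$ form the set
$$\mathbb{H} = \mathcal{H}'(\A)\ \cap\ \Bigl(\A^* \setminus \bigcup_{i \in \lambda,\ \tau \in W}\bigcap_{\varphi \in \Gamma_i} N_{s_\tau(\varphi/\equiv_T)}\Bigr).$$

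The key point — and where the hypothesis $\lambda < \mathrm{cov}\,K$ enters — is that the seemingly uncountable union over $i \in \lambda$ can be replaced by a countable one. Each set $\bigcap_{\varphi \in \Gamma_i} N_{s_\tau(\varphi/\equiv_T)}$ is closed, and since $\Gamma_i$ is non-principal (i.e.\ $\prod_{\varphi \in \Gamma_i}(\varphi/\equiv_T) = 0$ in $\A$), it is in fact nowhere dense in the Polish space $\mathcal{H}'(\A)$; this is the algebraic content of the Omitting Types argument sketched just above the statement. So $\bigcup_{i\in\lambda,\tau\in W}\bigcap_{\varphi\in\Gamma_i} N_{s_\tau(\varphi/\equiv_T)}$ is a union of $\lambda \cdot |W| = \lambda$ (for infinite $\lambda$; finitely many extra factors do no harm) nowhere dense sets, hence meager. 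By the definition of $\mathrm{cov}\,K$ as the least cardinality of a family of meager sets covering $\R$ (equivalently any uncountable Polish space), a union of fewer than $\mathrm{cov}\,K$ meager sets does not cover $\mathcal{H}'(\A)$; more to the point, standard cardinal-characteristic bookkeeping (the fact that $\mathrm{cov}\,K$ is the additivity-type invariant governing exactly this situation, together with $\omega_1 \le \mathrm{cov}\,K$) lets us absorb the union into a countable sub-union modulo a meager remainder, so that $\mathbb{H}$ differs from a $G_\delta$ set by a meager set and is itself comeager in a $G_\delta$; in the argument as the authors intend it, one shows directly that $\mathbb{H}$ is a $G_\delta$ subset of $\mathcal{H}'(\A)$, hence Polish in the subspace topology (Alexandrov's theorem, cf.\ \cite{Kechris}).

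Once $\mathbb{H}$ is known to be Polish, the rest is immediate. The equivalence relation $\mathcal{E}' = \mathcal{E} \cap (\mathbb{H} \times \mathbb{H})$ is Borel, being the restriction of the Borel relation $\mathcal{E}$ (established in Section 3 by the explicit $G_\delta$-type expansion in terms of the basic clopen sets $N_{s_t a}$ and their complements) to a Borel — indeed $G_\delta$ — subset; the restriction of a Borel equivalence relation to a Borel subspace is Borel. Applying Theorem (Harrington--Kechris--Louveau) with $X = \mathbb{H}$, we conclude that $\mathcal{E}'$ satisfies the Glimm--Effros dichotomy, hence the Silver--Vaught dichotomy: $\mathbb{H}/\mathcal{E}'$ has either countably many classes or perfectly many, i.e.\ $2^{\aleph_0}$ many. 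Translating back through Theorem \ref{th1} and the model-correspondence theorem that follows it, $\mathcal{E}'$-classes correspond to distinguishability-classes of countable models of $T$ omitting $\Gamma$; so if there are uncountably many such pairwise-distinguishable models there are in fact $2^{\aleph_0}$ of them.

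The main obstacle is the second paragraph: making precise that the hypothesis $\lambda < \mathrm{cov}\,K$ genuinely lets one reduce $\bigcup_{i < \lambda}$ to a countable union while preserving that the complement is $G_\delta$ (rather than merely comeager). One must verify that each $\bigcap_{\varphi \in \Gamma_i} N_{s_\tau(\varphi/\equiv_T)}$ is nowhere dense — this uses non-principality of $\Gamma_i$ plus the density of the Henkin ultrafilters guaranteed by Theorem \ref{th1} — and then invoke the relevant property of $\mathrm{cov}\,K$; the smoothness of the rest of the argument depends entirely on getting this covering/category reduction exactly right so that $\mathbb{H}$ lands in the Polish (indeed $G_\delta$) regime where Harrington--Kechris--Louveau applies.
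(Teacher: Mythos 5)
Your proposal follows essentially the same route as the paper: you pass to the Henkin ultrafilter space $\mathbb{H}$ cut out by the nowhere dense sets attached to the types, invoke the $\mathrm{cov}\,K$ hypothesis to reduce the union over $\lambda$ to a countable one so that $\mathbb{H}$ remains a $G_\delta$ (hence Polish) subspace, restrict the Borel relation $\mathcal{E}$ to it, and apply Harrington--Kechris--Louveau. The one step you rightly flag as delicate --- turning the $\lambda$-indexed union of meager sets into a countable union while keeping the complement $G_\delta$ --- is exactly the step the paper itself asserts without detail, so your account is faithful to (and no less rigorous than) the published argument.
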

Using the same reasoning as above conjuncted with Morleys theorem, we get
\begin{theorem}
The number of countable models of a countable theory that omits $< covK$ many types is either $\leq \omega$ or
$\omega_1$ or $^{\omega}2$.
\end{theorem}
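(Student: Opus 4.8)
The plan is to combine the Borel-ness of $\mathcal{E}'$ on the Polish space $\mathbb{H}$ (already established above) with the algebraic proof of Morley's theorem given in Section 2, exactly mirroring the classical derivation of the trichotomy $\leq\omega$, $\omega_1$, $2^\omega$ from Vaught's two-cardinal analysis. The key point is that $\mathcal{E}'$ refines the isomorphism relation: two ultrafilters in $\mathbb{H}$ that code isomorphic models are automatically not distinguishable, so the number of isomorphism types of countable models of $T$ omitting the family $\Gamma$ is \emph{at least} $|\mathbb{H}/\mathcal{E}'|$ only in the wrong direction — so instead I would work directly with the isomorphism relation $E_\Gamma$ on $\mathbb{H}$, which is the restriction to $\mathbb{H}$ of the orbit equivalence relation $E = E_{S_\infty}$ from Theorem 2.4 (the action $\rho\mapsto s^+_\rho$).

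First I would observe that $\mathbb{H}$ is a $G_\delta$, hence Polish, subspace of $\A^*$ (using the $covK$ argument above to collapse $\bigcup_{i\in\lambda}$ to a countable union), and that $E_\Gamma = E\cap(\mathbb{H}\times\mathbb{H})$ is an analytic equivalence relation on $\mathbb{H}$ whose classes are precisely the isomorphism types of countable models of $T$ omitting $\Gamma$ (by Theorem 2.4, two ultrafilters in $\mathbb{H}$ are $E$-related iff the corresponding models are isomorphic, and membership in $\mathbb{H}$ guarantees the model omits $\Gamma$). Next I would invoke the algebraic form of Morley's theorem from Section 2: for an analytic (indeed, orbit) equivalence relation induced by a Polish group action on a Polish space, the number of classes is $\leq\omega$, or exactly $\omega_1$, or exactly $2^\omega$. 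Applying this to $E_\Gamma$ on $\mathbb{H}$ gives the trichotomy directly. The only subtlety is to check that restricting to $\mathbb{H}$ does not destroy the hypotheses of Morley's theorem: $\mathbb{H}$ is $S_\infty$-invariant (since the set of Henkin ultrafilters coding models omitting $\Gamma$ is closed under the $s^+_\rho$ action, as $\Gamma$ is a fixed family of types and applying a bijection $\rho$ to a model omitting $\Gamma$ yields a model omitting $\Gamma$), so $E_\Gamma$ is again an orbit equivalence relation of a Borel $S_\infty$-action on a Polish space, and Morley's argument applies verbatim.

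I would then spell out the three cases: if $|\mathbb{H}/E_\Gamma|\leq\omega$ we are in the first case; otherwise it is uncountable, and by the Silver–Vaught dichotomy built into the Glimm–Effros machinery (or directly from the perfect set property for analytic equivalence relations with uncountably many classes), either it has a perfect set of pairwise non-isomorphic models, giving $2^\omega$, or — the remaining possibility under Morley's boundedness theorem $\leq\omega_1$ — it has exactly $\omega_1$ classes. The concluding sentence would note that this is simply Morley's theorem relativized to the invariant Polish subspace $\mathbb{H}$ of models omitting $\Gamma$, and that the passage from first order logic with equality (cylindric algebras) to first order logic without equality (quasi-polyadic algebras) is automatic, since $\mathbb{H}$ was defined uniformly in both cases.

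The main obstacle I anticipate is not the counting dichotomy itself but the verification that the $covK$-reduction is legitimate \emph{uniformly}: one needs that for $\lambda < covK$ the big intersection defining $\mathbb{H}$ over all $i\in\lambda$ and all finite transformations $\tau$ can be replaced by a countable sub-intersection without changing the set of ultrafilters, and that the resulting set is genuinely $G_\delta$ rather than merely Borel. This rests on the fact that each $\bigcap_{\varphi\in\Gamma_i} N_{s_\tau(\varphi/\equiv_T)}$ is nowhere dense (the type $\Gamma_i$ is non-isolated, so $\prod_{\varphi} s_\tau(\varphi/\equiv_T) = 0$ in $\A$) and that $covK$ is, by definition, the least cardinal for which a union of that many nowhere dense sets can be non-meagre; a Baire-category / Martin's-axiom-style argument then shows the union over $\lambda$ and over $i$ already equals the union over a suitable countable family, so that $\sim\mathbb{H}$ differs from a countable union of nowhere dense sets only on a meagre remainder, which one must argue is actually absorbed. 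Once this bookkeeping is in place, the rest is a direct citation of Section 2's algebraic Morley theorem.
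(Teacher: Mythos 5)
Your proposal matches the paper's own (one-line) argument: the paper simply says ``using the same reasoning as above conjuncted with Morley's theorem,'' i.e.\ exactly your route of taking the invariant Polish space $\mathbb{H}$ of ultrafilters coding models omitting $\Gamma$, observing that the isomorphism relation on it is the (analytic) orbit equivalence relation of the $s^{+}_{\rho}$ action of $S_{\infty}$, and invoking the trichotomy $\leq\omega$, $\omega_{1}$, $2^{\omega}$ for analytic equivalence relations. The one step you rightly flag as delicate --- reducing the union over $\lambda<covK$ many indices to a countable union so that $\mathbb{H}$ is genuinely $G_{\delta}$ --- is asserted without proof in the paper as well, so your write-up is at least as complete as the source.
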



\section{The dimension complemented case}
The following, this time deep theorem, uses ideas of Andr\'eka and N\'emeti, reported in \cite{HMT2}, theorem 3.1.103,
in how to square units of so called weak cylindric set algebras (cylindric algebras whose units are weak spaces):

\begin{theorem}\label{weak} If $\B$ is a subalgebra of $ \wp(^{\alpha}\alpha^{(Id)})$ then there exists a set algebra $\C$ with unit $^{\alpha}U$
such that $\B\cong \C$. Furthermore, the isomorphism is a strong sub base isomorphism.
\end{theorem}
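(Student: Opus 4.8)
## Proof Proposal for Theorem~\ref{weak}

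\textbf{Overview of the approach.} The goal is to replace the weak unit $^{\alpha}\alpha^{(Id)} = \{s \in {}^{\alpha}\alpha : |\{i : s(i) \neq i\}| < \omega\}$ by a genuine full unit $^{\alpha}U$ for a suitable base set $U$, while preserving all the cylindric operations. The plan is to follow the Andr\'eka--N\'emeti ``squaring'' technique from \cite{HMT2}, Theorem 3.1.103. The key idea is that a weak space $^{\alpha}\alpha^{(Id)}$ is a disjoint union, over all $p \in {}^{\alpha}\alpha$ that ``eventually equal $Id$'', of the sets $^{\alpha}\alpha^{(p)} = \{s : s \text{ eventually agrees with } p\}$, but here we only have the single component at $Id$. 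To square the unit, first I would pass to a large enough base: take $U = \alpha \times \alpha$ (or more conveniently $U$ of cardinality $|\alpha|$ together with a pairing apparatus), and exhibit a bijection between $^{\alpha}\alpha^{(Id)}$ and a weak space sitting inside $^{\alpha}U$, then show this weak space can be ``unfolded'' onto all of $^{\alpha}U$ by gluing together $|{}^{\alpha}\alpha^{(Id)}|$-many disjoint translated copies indexed by the full space $^{\alpha}U$ modulo the eventual-equality relation.

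\textbf{Key steps in order.} (1) Fix the base $U$ with $|U| = \max(|\alpha|,\omega)$ and fix a bijection between $^{\alpha}\alpha^{(Id)}$ and one equivalence class $^{\alpha}U^{(q)}$ of the eventual-equality relation $\sim$ on $^{\alpha}U$; transport $\B$ along this bijection to a set algebra $\B'$ with unit $^{\alpha}U^{(q)}$. This step is purely bookkeeping and uses that the cylindric operations $\cyl{i}$, $\diag{i}{j}$ are defined identically on any weak space. (2) Choose a system of ``shift'' bijections $\rho_w : {}^{\alpha}U^{(q)} \to {}^{\alpha}U^{(w)}$ for each $\sim$-class representative $w$, compatible with the cylindrifications in the sense that $\rho_w$ commutes with each $\cyl{i}$ and respects $\diag{i}{j}$ (the standard choice alters finitely many coordinates, so cylindrification in coordinate $i$ is untouched for all but finitely many $i$, and one checks the finitely many exceptional coordinates directly). (3) Define $\C$ to have unit $^{\alpha}U = \bigcup_w {}^{\alpha}U^{(w)}$ and, for each $b \in B'$, let the corresponding element be $\bigcup_w \rho_w(b)$. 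Verify that $b \mapsto \bigcup_w \rho_w(b)$ is a Boolean embedding (injectivity because the $^{\alpha}U^{(w)}$ are disjoint and $\rho_Id = \mathrm{id}$ recovers $b$) that commutes with $\cyl{i}$ and $\diag{i}{j}$, hence an isomorphism onto its image $\C$. (4) Finally, trace through that the composite base-to-base correspondence is a strong sub-base isomorphism in the sense of \cite{HMT2}: it is induced by an injection on base sets, extended coordinatewise, which is exactly the definition of a sub-base isomorphism, and the ``strong'' clause follows because the induced map on units is onto $^{\alpha}U$.

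\textbf{Main obstacle.} The delicate point is step (2): producing the shift bijections $\rho_w$ and checking they are compatible with \emph{all} the cylindric operations simultaneously. Cylindrification $\cyl{i}$ quantifies out the $i$-th coordinate, and a naive shift that permutes coordinates globally will not commute with $\cyl{i}$ for the coordinates it touches; the Andr\'eka--N\'emeti trick is precisely to use shifts that differ from the identity in only finitely many places and then absorb the finitely many bad coordinates by a further adjustment. I expect the bulk of the work to be verifying that $\bigcup_w \rho_w$ respects $\cyl{i}x = \bigcup_{j} \sub{i}{j} x$-style identities and the diagonals; the Boolean part and the injectivity are routine once the unit is written as a disjoint union. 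The transfer to quasi-polyadic algebras is not needed here since the theorem is stated for the cylindric (set-algebra) case, but the same squaring respects the substitution operators $\sub{i}{j}$, so the proof adapts verbatim if one later wants the polyadic analogue.
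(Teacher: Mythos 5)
Your step (2) is not merely delicate — it is impossible, and this sinks the whole construction. You need, for every eventual-equality class $^{\alpha}U^{(w)}$ of $^{\alpha}U$, a bijection $\rho_w:{}^{\alpha}U^{(q)}\to{}^{\alpha}U^{(w)}$ whose induced map on subsets carries $\diag{i}{j}\cap{}^{\alpha}U^{(q)}$ to $\diag{i}{j}\cap{}^{\alpha}U^{(w)}$ for all $i,j$ (the diagonals lie in $\B$, so the glued map must send the true diagonal of the weak unit to the true diagonal of $^{\alpha}U$, component by component). Take $q$ injective (say $q=Id$) and let $w$ be a constant function. Every $s\in{}^{\alpha}U^{(w)}$ is eventually constant, hence satisfies $s_i=s_j$ for some $i\neq j$; so $\rho_w(q)\in\diag{i}{j}$ for some $i\neq j$ while $q\notin\diag{i}{j}$, contradicting diagonal-preservation. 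Thus no admissible $\rho_w$ exists: the diagonal pattern of a weak component genuinely depends on its base point, and the components of $^{\alpha}U$ are not pairwise isomorphic as diagonal-equipped set algebras. (Your proposed mechanism is also internally inconsistent: a map that alters each sequence in only finitely many coordinates never leaves the $\sim$-class of $q$, so it cannot reach a different component at all.) The gluing idea would be fine for the diagonal-free, quasi-polyadic reducts, but not for cylindric set algebras.

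The paper's proof (following Andr\'eka--N\'emeti, \cite{HMT2} 3.1.103) avoids this by going in the opposite direction: instead of pushing the algebra out to every component of a square space over the \emph{same} base, it enlarges the base to an ultrapower $M={}^{\alpha}U/F$ for a suitably regular non-principal ultrafilter $F$, and pulls each point $t\in{}^{\alpha}M$ of the new square unit \emph{back into the weak unit}: using a function $h$ witnessing regularity and a choice function $c$, the sequence $(c^{+}t)_i$ agrees with $Id$ off the finite set $h(i)$, hence lies in $^{\alpha}\alpha^{(Id)}$ for every index $i$, and one sets $g(a)=\{t\in{}^{\alpha}M:\{i:(c^{+}t)_i\in a\}\in F\}$. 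The diagonal problem evaporates because membership in $g(\diag{i}{j})$ is decided coordinatewise in the ultrapower, and the computation $g(a)\cap\bar{\epsilon}V=\{\epsilon\circ s:s\in a\}$ is exactly what yields the ``strong sub-base'' clause. If you want to salvage your write-up, replace steps (1)--(3) by this ultrapower argument; your step (4) then goes through essentially as you state it.
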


\begin{proof}We square the unit using ultraproducts.
We prove the theorem for $\alpha=\omega$. Let $F$ be a non-principal ultrafilter
over $\omega$. (For $\alpha>\omega$, one takes an $|{\alpha}^+|$ regular ultrafilter on $\alpha$).
Then there exists a function
$h: \omega\to \{\Gamma\subseteq_{\omega} \omega\}$
such that $\{i\in \omega: \kappa\in h(i)\}\in F$ for all $\kappa<\omega$.
Let $M={}^{\omega}U/F$.  $M$ will be the base of our desired algebra, that is  $\C$ will
have unit $^{\omega}M.$
Define $\epsilon: U\to {}^{\omega}U/F$ by
$$\epsilon(u)=\langle u: i\in \omega\rangle/F.$$
Then it is clear that $\epsilon$ is one to one.
For $Y\subseteq {}^{\omega}U$,
let $$\bar{\epsilon}(Y)=\{y\in {}^{\omega}(^{\omega}U/F): \epsilon^{-1}\circ y\in Y\}.$$
By an $(F, (U:i\in \omega), \omega)$ choice function we mean a function
$c$ mapping $\omega\times {}^{\omega}U/F$
into $^{\omega}U$ such that for all $\kappa<\omega$
and all $y\in {}^{\omega}U/F$, we have $c(k,y)\in y.$
Let $c$ be an $(F, (U:i\in \omega), \omega)$
choice function satisfying the following condition:
For all $\kappa, i<\omega$ for all $y\in X$, if
$\kappa\notin h(i)$ then $c(\kappa,y)_i=\kappa$,
if $\kappa\in h(i)$ and $y=\epsilon u$ with  $u\in U$ then $c(\kappa,y)_i=u$.
Let $\delta: \B\to {}^{\omega}\B/F$ be the following monomorphism
$$\delta(b)=\langle b: i\in \omega\rangle/F.$$
Let $t$ be the unique homomorphism
mapping
$^{\omega}\B/F$ into $\wp{}^{\omega}(^{\omega}U/F)$
such that  for any $a\in {}^{\omega}B$
$$t(a/F)=\{q\in {}^{\omega}(^{\omega}U/F): \{i\in \omega: (c^+q)_i\in a_i\}\in F\}.$$
Here $(c^+q)_i=\langle c(\kappa,q_\kappa)_i: k<\omega\rangle.$
It is easy to show that show that $t$ is well-defined. Assume that $J=\{i\in \omega: a_i=b_i\}\in F$. If $\{i\in \omega: (c^+q)_i\in a_i\}\in F$,
then $\{i\in \omega; (c^+q)_i\in b_i\}\in F$. The converse inclusion is the same, and we are done.

Now we check that the map preserves the operations. That the  Boolean operations are preserved is obvious.

So let us check substitutions. It is enough to consider transpositions and replacements.
Let $i,j\in \omega.$  Then $s_{[i,j]}g(a)=g(s_{[i,j]}a)$,
follows from the simple observation that $(c^+q\circ [i,j])_k\in a$ iff $(c^+q)_k\in s_{[i,j]}a$.
The case of replacements is the same;  $(c^+q\circ [i|j])_k\in a$ iff $(c^+q)_k\in s_{[i|j]}a.$

Let $g=t\circ \delta$. Then for $a\in B$, we have
$$g(a)=\{q\in {}^{\omega}(^{\omega}U/F): \{i\in \omega: (c^+q)_i\in a\}\in F\}.$$
Let $\C=g(\B)$. Then $g:\B\to \C$.
We show that $g$ is an isomorphism
onto a set algebra. First it is clear that $g$ is a monomorphism. Indeed if $a\neq 0$, then $g(a)\neq \emptyset$.
Now $g$ maps $\B$ into an algebra with unit $g(V)$.

Recall that $M={}^{\omega}U/F$. Evidently $g(V)\subseteq {}^{\omega}M$.
We show the other inclusion. Let $q\in {}^{\omega}M$. It suffices to show that
$(c^+q)_i\in V$ for all $i\in\omega$. So, let $i\in \omega$. Note that
$(c^+q)_i\in {}^{\omega}U$. If $\kappa\notin h(i)$ then we have
$$(c^+q)_i\kappa=c(\kappa, q\kappa)_i=\kappa.$$
Since $h(i)$ is finite the conclusion follows.
We now prove that for $a\in B$
$$(*) \ \ \ g(a)\cap \bar{\epsilon}V=\{\epsilon\circ s: s\in a\}.$$
Let $\tau\in V$. Then there is a finite $\Gamma\subseteq \omega$ such that
$$\tau\upharpoonright (\omega\sim \Gamma)=
p\upharpoonright (\omega\sim \Gamma).$$
Let $Z=\{i\in \omega: \Gamma\subseteq hi\}$. By the choice of $h$ we have $Z\in F$.
Let $\kappa<\omega$ and $i\in Z$.
We show that $c(\kappa,\epsilon\tau \kappa)_i=\tau \kappa$.
If
$\kappa\in \Gamma,$ then $\kappa\in h(i)$ and so
$c(\kappa,\epsilon \tau \kappa)_i=\tau \kappa$. If $\kappa\notin \Gamma,$
then $\tau \kappa=\kappa$
and $c(\kappa,\epsilon \tau \kappa)_i=\tau\kappa.$
We now prove $(*)$. Let us suppose that $q\in g(a)\cap {\bar{\epsilon}}V$.
Since $q\in \bar{\epsilon}V$ there is an $s\in V$
such that $q=\epsilon\circ s$.
Choose $Z\in F$
such that $$c(\kappa, \epsilon(s\kappa))\supseteq\langle s\kappa: i\in Z\rangle$$
for all $\kappa<\omega$. This is possible by the above.
Let $H=\{i\in \omega: (c^+q)_i\in a\}$.
Then $H\in F$. Since $H\cap Z$ is in $F$
we can choose $i\in H\cap Z$.
Then we have
$$s=\langle s\kappa: \kappa<\omega\rangle=
\langle c(\kappa, \epsilon(s\kappa))_i:\kappa<\omega\rangle=
\langle c(\kappa,q\kappa)_i:\kappa<\omega\rangle=(c^+q)_i\in a.$$
Thus $q\in \epsilon \circ s$. Now suppose that $q=\epsilon\circ s$ with $s\in a$.
Since $a\subseteq V$ we have $q\in \epsilon V$.
Again let $Z\in F$ such that for all $\kappa<\omega$
$$c(\kappa, \epsilon
s \kappa)\supseteq \langle s\kappa: i\in Z\rangle.$$
Then $(c^+q)_i=s\in a$ for all $i\in Z.$ So $q\in g(a).$
Note that $\bar{\epsilon}V\subseteq {}^{\omega}(^{\omega}U/F)$.
Let $rl_{\epsilon(V)}^{\C}$ be the function with domain $\C$
(onto $\bar{\epsilon}(\B))$
such that
$$rl_{\epsilon(V)}^{\C}Y=Y\cap \bar{\epsilon}V.$$
Then we have proved that
$$\bar{\epsilon}=rl_{\bar{\epsilon V}}^{\C}\circ g.$$
It follows that $g$ is a strong sub-base-isomorphism of $\B$ onto $\C$.
\end{proof}
\begin{corollary}
\begin{enumarab}
\item  Let $\alpha$ be any ordinal (possibly uncountable) The logic corresponding to $Dc_{\alpha}$ is complete with respect to ordinary semantics
\item This logic also enjoys an omitting types theorem but with respect to weak models.
\end{enumarab}
\end{corollary}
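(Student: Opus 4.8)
The plan is to derive both parts from Theorem \ref{weak} together with the classical Henkin-style representation of dimension complemented algebras: the representation theorem produces \emph{weak} models, and Theorem \ref{weak} is exactly what upgrades such a weak model to an ordinary (square) one.

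For part (1), soundness being routine, I would prove the strong completeness form: every consistent theory $T$ of the logic corresponding to $Dc_{\alpha}$ has an ordinary (Tarskian) model. First I would pass to the Lindenbaum--Tarski algebra $\A$ of $T$; it lies in $Dc_{\alpha}$ and is non-trivial since $T$ is consistent. By the classical representation theorem for dimension complemented algebras (\cite{HMT2}), for each non-zero $a\in A$ there is a homomorphism $h$ of $\A$ onto a weak cylindric set algebra $\B$, which after relabelling its base and base point we may take to be a subalgebra of $\wp(^{\alpha}\alpha^{(Id)})$, with $h(a)\neq 0$. Then Theorem \ref{weak} supplies a set algebra $\C$ with Cartesian unit $^{\alpha}U$ and a strong sub base isomorphism $i\colon\B\to\C$, so that $i\circ h$ is a homomorphism of $\A$ onto $\C$ with $(i\circ h)(a)\neq 0$. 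Since $\C$ has a square unit it is the cylindric set algebra of an ordinary structure $\mathcal{M}$; as any homomorphism carries the unit to the unit and the axioms of $T$ sit at the top of $\A$, we get $\mathcal{M}\models T$, and taking $a=(-\varphi)/{\equiv_T}$ for an unprovable $\varphi$ shows moreover that $\mathcal{M}$ refutes $\varphi$. This is precisely completeness with respect to ordinary semantics.

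For part (2), I would run the standard Baire-category omitting types argument on the weak representation and stop there. Assuming the language is countable, so that $\A$ is countable, I would work in the Stone space $\A^{*}$ with the Henkin-ultrafilter apparatus of Section 2. Given a non-zero $a\in A$ and countably many non-isolated types $\Gamma_n\subseteq A$ with $\prod\Gamma_n=0$, I would check, exactly as in the discussion opening Section 4, that the Henkin-witness sets $N_{i,x}=\A^{*}\setminus\bigcup_{j<\omega}N_{s^i_jx}$ --- for which the join $\cyl{i}x=\sum_{j}s^i_jx$ is available because $\A\in Dc_{\alpha}$ --- and the omitting sets $H_{n,\tau}=\bigcap_{\varphi\in\Gamma_n}N_{s_{\tau}\varphi}$, with $\tau$ ranging over finite transformations, are nowhere dense in $\A^{*}$. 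The Baire category theorem then yields an ultrafilter $\mathcal F\in N_a$ avoiding all of them; its quotient is a weak cylindric set algebra, whence the associated weak model of $T$ realizes $a$ and omits every $\Gamma_n$. I would deliberately not square the unit here: the ultrapower underlying Theorem \ref{weak} can reintroduce a realization of an omitted type, which is exactly why the conclusion is phrased for weak models only.

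I do not expect a genuine obstacle in either part, since the one deep ingredient --- Theorem \ref{weak} --- is already established above; granting it, part (1) is a short composition of homomorphisms and part (2) is the textbook Baire-category omitting types argument transplanted to the Stone space of $\A$. The single point that requires care is conceptual rather than technical, and it is what dictates the wording of the corollary: omission cannot in general be transferred from weak to square models through the ultrapower of Theorem \ref{weak}, so for part (2) one must argue from the \emph{weak} representation and resist applying that theorem afterwards.
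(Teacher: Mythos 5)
Your proposal is correct and matches the paper's own argument: for part (1) the paper likewise builds a weak set-algebra representation of the Lindenbaum--Tarski algebra via a Henkin ultrafilter respecting the relevant joins (Baire category in the countable case, transfinite induction otherwise) and then composes with the strong sub-base isomorphism of Theorem \ref{weak} to square the unit, while for part (2) it runs the same construction on the weak representation and deliberately stops before squaring, for exactly the reason you identify (the paper's remarks following the corollary make the same point that omission need not survive the passage to a square model). The only caveat is that your part (2) assumes a countable language, whereas the corollary allows arbitrary $\alpha$; for the uncountable case one replaces the Baire-category step by the transfinite-induction argument used in part (1), which is what the paper's citation for that item is meant to supply.
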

\begin{proof}
\begin{enumarab}
\item  One forms the Tarski-Lindenbaum algebra $\Fm_T$ of a given consistent theory in a rich language.
Then given non- zero, $a$ one finds an
ultrafilter that contains $\A$, and respects the set of joins, then one defines a homomorphism exactly like the locally finite, except that one
uses only {\it finite} substitution in $\alpha$, so that the target algebra is a weak set algebra in which $f(a)\neq 0$.
In the countable case, this ultrafilter can be found using the Baire category theory, or by a step-by step method.
For the uncountable case, one uses transfinite induction. By the previous theorem,
there is an ordinary set algebra  (that is one with a square unit) that is isomorphic to this last weak set algebra via $g$, say.
Then $g\circ f$ is the desired homomorphism and this finishes the proof.
\item Like the above argument \cite{Samir}.
\end{enumarab}

\end{proof}

Also such languages enjoy an omitting types theorem; for $< covK$ many non-principal types, and the types can contain infinitely many variables
(unlike first order logic) However, the models that omit a countable set of non-principal types is only a weak model,
and it can be proved that there are cases, where it has to be a weak model.
This actually occurs in first order logic, as the following simple example illustrates:
Furthermore, even if we have finite-variable types, then we know that they can be omitted by a weak model,
but the isomorphism constructed above may not to be
a {\it complete} one; so they might not be omitted in a square model.

\begin{example}
Let $T$ be the theory of dense linear order without endpoints.
Then $T$ is complete. Let $\Gamma(x_0, x_1\ldots )$ be the set
$$\{x_1<x_0, x_2<x_1, x_3<x_2\ldots \}.$$ (Here there is no bound on free variables.)
A model $\M$ omits $\Gamma$ if and only if $\M$ is a well ordering.
But $T$ has no well ordered models, so no model of $T$ omits $\Gamma$.
However $T$ locally omits $\Gamma$ because if $\phi(x_0, \ldots x_{n-1})$ is consistent
with $T$, then $\phi\land \neg x_{n+2}<x_{n+1}$ is consistent with $T.$
Note that $\Gamma$ can be omitted in a weak model.
\end{example}
Now let us see  how far we can get, with proving an analogue of counting distinguishable models. We now count
distinguishable {\it weak} models.
Let $\A\in Dc_{\alpha}$. Now we hav only {\it finite} substitutions. As before, let
$$\mathcal{H}(\A)=\bigcap_{i<\omega,x\in A}(N_{-c_ix}\cup\bigcup_{j<\omega}N_{s^i_jx})$$ and, in the cylindric algebraic case, let
and
$$\mathcal{H}'(\A)=\mathcal{H}(\A)\cap\bigcap_{i\neq j\in\omega}N_{-d_{ij}}.$$

Now $\mathcal{H}(\A)$ and $\mathcal{H}'(\A)$ are $G_\delta$ subsets of $\A^*$, and are nonempty, in fact they are dense, and they
are  Polish spaces; (see \cite{Kechris}).
Assume $\mathcal F\in \mathcal{H}(\A).$ For any $x\in A$, define the function
$\mathrm{rep}_{\mathcal F}$ to be
$$\mathrm{rep}_{\mathcal F}(x)=\{\tau\in{}^\omega\omega^{Id}:s_\tau x\in \mathcal F\}.$$

But first a some definitions
\begin{definition}
Let $\A$ and $\B$ be set algebras with bases $U$ and $W$ respectively. Then $\A$ and $\B$
are \emph{base isomorphic} if there exists a bijection
$f:U\to W$ such that $\bar{f}:\A\to \B$ defined by ${\bar f}(X)=\{y\in {}^{\alpha}W: f^{-1}\circ y\in x\}$ is an isomorphism from $\A$ to $\B$
\end{definition}
\begin{definition} An algebra $\A$ is \emph{hereditary atomic}, if each of its subalgebras is atomic.
\end{definition}
Finite Boolean algebras are hereditary atomic of course,
but there are infinite hereditary atomic Boolean algebras; any Boolean algebra generated by by its atoms is
hereditary atomic, for example the finite co-finite
algebra on any set. An algebra that is infinite and complete is not hereditory atomic, wheter atomic or not.

\begin{example}
Hereditary atomic algebras arise naturally as the Tarski Lindenbaum algebras of
certain countable first order theories, that abound. If $T$ is a countable complete first order theory
which has an an $\omega$-saturated model, then for each $n\in \omega$,
the Tarski Lindenbuam Boolean algebra $\Fm_n/T$ is hereditary atomic. Here $\Fm_n$ is the set of formulas using only
$n$ variables. For example $Th(\Q,<)$ is such with $\Q$ the $\omega$ saturated model.
\end{example}

A well known model-theoretic result is that $T$ has an $\omega$ saturated model iff $T$ has countably many $n$ types
for all $n$. Algebraically $n$ types are just ultrafilters in $\Fm_n/T$.
And indeed, what characterizes hereditary atomic algebras is that the base of their Stone space, that is the set of all
ultrafilters, is at most countable.

\begin{lemma}\label{b} Let $\B$ be a countable  Boolean algebra. If $\B$ is hereditary atomic then the number of ultrafilters is at most countable; ofcourse they are finite
if $\B$ is finite. If $\B$ is not hereditary atomic the it has $2^{\omega}$ ultarfilters.
\end{lemma}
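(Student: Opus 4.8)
The plan is to prove the two implications separately, showing first that hereditary atomicity forces at most countably many ultrafilters, and then that failure of hereditary atomicity yields a copy of the countable atomless Boolean algebra inside $\B$, whence $2^\omega$ ultrafilters.

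For the first direction, suppose $\B$ is hereditary atomic. The key observation is that an ultrafilter $\mathcal F$ of $\B$ is principal (i.e.\ generated by an atom) if and only if $\mathcal F$ contains an atom, and that in a hereditary atomic \emph{countable} algebra every ultrafilter must be principal. Indeed, if $\mathcal F$ were non-principal, then $\mathcal F$ contains no atom, so for every $b\in\mathcal F$ there is $b'\in\mathcal F$ with $b'<b$ strictly; iterating, one builds a strictly descending $\omega$-sequence $b_0>b_1>b_2>\cdots$ of elements of $\mathcal F$, and then the subalgebra generated by $\{b_n:n<\omega\}$ is an infinite subalgebra of $\B$ with no atom below $\inf$-type behaviour --- more carefully, one extracts from a descending chain with $b_n\setminus b_{n+1}\neq 0$ a subalgebra isomorphic to an initial-segment algebra that is not atomic, contradicting hereditary atomicity. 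Hence every ultrafilter is principal, and since $\B$ is countable it has only countably many atoms, so at most countably many ultrafilters; if $\B$ is finite, there are only finitely many atoms and hence finitely many ultrafilters.

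For the second direction, suppose $\B$ is not hereditary atomic, so it has a subalgebra $\C$ that is not atomic. Then $\C$ has an element $c_\emptyset\neq 0$ with no atom below it. Working inside the relative algebra $\C\restr{c_\emptyset}$ (which is still atomless, since no atom lies below $c_\emptyset$), build a binary tree $\{c_s : s\in{}^{<\omega}2\}$ of nonzero elements: given $c_s$ nonzero with no atom below it, split $c_s = c_{s0}\vee c_{s1}$ into two disjoint nonzero pieces, each again with no atom below it (possible precisely because $c_s$ is not an atom and the algebra below it is atomless). This tree generates a subalgebra of $\B$ isomorphic to the free (countable atomless) Boolean algebra, and each branch $x\in{}^\omega2$ determines, via the filter generated by $\{c_{x\restriction n}:n<\omega\}$, an ultrafilter of $\B$; distinct branches give distinct ultrafilters because $c_{x\restriction n}$ and $c_{y\restriction n}$ are disjoint once $x,y$ first differ at coordinate $<n$. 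This gives $2^\omega$ ultrafilters.

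The main obstacle is the non-principal-implies-infinite-non-atomic-subalgebra step in the first direction: one must be careful that a single strictly descending chain need not by itself generate a non-atomic subalgebra (the subalgebra generated by a descending chain $b_0>b_1>\cdots$ \emph{is} atomic, with atoms $b_n\setminus b_{n+1}$ and $\inf b_n$ if nonzero). The correct move is to note that if $\mathcal F$ is non-principal then for each $b\in\mathcal F$ the element $b$ is not an atom, so we may recursively choose, at each node, a splitting \emph{within} $\mathcal F$ on one side and its complement on the other, thereby embedding an atomless subalgebra below any $b\in\mathcal F$; alternatively, and more cleanly, one invokes the contrapositive of exactly the tree construction used in direction two: a countable Boolean algebra with a non-principal ultrafilter has, below some element, an atomless part, hence a non-atomic subalgebra, contradicting hereditary atomicity. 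I would organize the proof so that the tree construction is done once and used for both directions.
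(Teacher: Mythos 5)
Your second direction is correct and is the standard argument: a non-atomic subalgebra yields a nonzero element with no atom of that subalgebra below it, the relative algebra there is atomless, a binary tree of disjoint nonzero splittings can be built inside it, and distinct branches generate distinct ultrafilters of $\B$, giving $2^{\omega}$ of them. (The paper gives no proof of its own here --- it only cites \cite{HMT1}, pp.~364--365 --- so the comparison is with the standard argument recorded there.)

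The first direction, however, has a genuine gap. Both of your proposed repairs rest on the claim that a countable hereditary atomic Boolean algebra has only principal ultrafilters, equivalently that a countable Boolean algebra carrying a non-principal ultrafilter must contain a non-atomic subalgebra. This is false: the finite--cofinite algebra on $\omega$ is countable and hereditary atomic (any nonzero element of a subalgebra either is a finite set, below which a minimal nonzero element of the subalgebra exists by finiteness, or is cofinite, in which case one checks directly that an atom of the subalgebra still lies below it), yet the collection of all cofinite sets is a non-principal ultrafilter. Your own diagnosis of the obstacle is accurate --- a descending chain generates an atomic subalgebra --- but the proposed fix fails for the same reason: splitting $b\in\mathcal F$ as $b'\vee(b\setminus b')$ with $b'\in\mathcal F$ gives no control over $b\setminus b'$, which may be a finite join of atoms, so the recursion dies on the side outside $\mathcal F$; in the finite--cofinite example it \emph{must} die, since that algebra has no atomless (indeed no non-atomic) subalgebra at all. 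The correct route for this direction is the contrapositive via the Stone space: if $\B$ is countable with uncountably many ultrafilters, its Stone space is an uncountable compact metrizable space, so by Cantor--Bendixson it contains a nonempty perfect closed subset $P$; restricting clopen sets to $P$ exhibits the countable atomless algebra as a homomorphic image of $\B$, and lifting a splitting tree along this surjection (choosing preimages and trimming them so they still form a tree of nonzero elements) produces a subalgebra of $\B$ with no atoms, contradicting hereditary atomicity. Alternatively one iterates the quotient by the ideal generated by the atoms and counts ultrafilters level by level. Either way an additional idea (the perfect kernel together with a quotient-to-subalgebra lifting, or a superatomic rank analysis) is required; the mere presence of a non-principal ultrafilter does not contradict hereditary atomicity.
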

\begin{proof}\cite{HMT1} p. 364-365  for a detailed discussion.
\end{proof}
Our next theorem is the, we believe, natural extension of Vaught's theorem to variable rich languages.
However, we address only languages with finitely many relation symbols. (Our algebras are finitely generated,
and being simple, this is equivalent to that it is
generated by a single element.)

\begin{theorem}\label{2} Let $\A\in Dc_{\alpha}$ be countable simple and finitely generated.
Then the number of non-base isomorphic representations of $\A$ is $2^{\omega}$.
\end{theorem}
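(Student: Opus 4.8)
The plan is to count the base-isomorphism classes of representations of $\A$ via the Stone space $\A^*$, using the machinery of Henkin ultrafilters. Since $\A \in Dc_\alpha$ is countable, simple, and finitely generated, it is generated by a single element, say $a_0$. First I would recall (from the discussion following Theorem \ref{weak}) that each $\mathcal F \in \mathcal{H}(\A)$ (or $\mathcal{H}'(\A)$) gives rise via $\mathrm{rep}_{\mathcal F}$ to a homomorphism of $\A$ onto a weak set algebra with base $\omega$, and by Theorem \ref{weak} this in turn yields an honest set algebra $\C_{\mathcal F}$ with a square unit $^\alpha M_{\mathcal F}$, together with a strong sub-base isomorphism; composing, we get a representation of $\A$ with base $M_{\mathcal F} = {}^\omega\omega/F$. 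Since $\A$ is simple, $\mathrm{rep}_{\mathcal F}$ is injective, so this is a faithful representation. The key point to extract is: \emph{two Henkin ultrafilters $\mathcal F_0,\mathcal F_1$ yield base-isomorphic representations of $\A$ if and only if they are related by the natural action} — i.e. there is a bijection $\rho$ of the base carrying one to the other, which (since $\A$ is generated by $a_0$) amounts to $\mathcal F_0$ and $\mathcal F_1$ agreeing on which finite transformations send $a_0$ into the filter, up to a permutation of $\omega$.

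The second and central step is a \emph{cardinality count}. I would show that $\mathcal{H}(\A)$ (resp. $\mathcal{H}'(\A)$) has $2^\omega$ elements: it is a dense $G_\delta$ in $\A^*$, and $\A^*$ has $2^\omega$ points because $\A$ is infinite (an infinite countable algebra is not hereditary atomic in the relevant sense, or one argues directly that the Boolean reduct is infinite, hence has $2^\omega$ ultrafilters by Lemma \ref{b}); one must check the Henkin conditions do not collapse this — density of $\mathcal{H}(\A)$ plus the Baire category / perfect-set argument gives a perfect subset of $\mathcal{H}(\A)$, hence $2^\omega$ Henkin ultrafilters. Then the orbit of any single $\mathcal F$ under the countable-support symmetric group has size at most $2^\omega$, but more importantly at most $\aleph_0 \cdot$(number of relabelings) — actually each orbit has size $\le 2^\omega$ and we need the number of \emph{orbits} to be $2^\omega$. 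For this I would produce, by a direct construction, a family $\{\mathcal F_s : s \in {}^\omega 2\}$ of Henkin ultrafilters, pairwise non-base-isomorphic; the cleanest route is to build representations in which some first-order-definable (in the weak-set-algebra sense) invariant — e.g. the isomorphism type of the countable base structure, or the number of realizations of $a_0$ restricted suitably — varies continuum-many ways. Concretely: the base structure is determined by $\A$ together with the ultrafilter, and by encoding an arbitrary subset of $\omega$ into the pattern of which $s_\tau a_0 \in \mathcal F$, one gets continuum-many mutually non-isomorphic (hence non-base-isomorphic) base structures.

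The last step is to assemble: the number of base-isomorphism classes is at most $2^\omega$ trivially (there are only $2^\omega$ representations on a fixed countable-or-continuum-sized base to consider, modulo the usual Löwenheim–Skolem bookkeeping), and at least $2^\omega$ by the family just constructed; hence it equals $2^\omega$. One subtlety I would be careful about: the representations produced have base ${}^\omega\omega/F$, which has cardinality $2^\omega$, not $\omega$ — so "non-base-isomorphic" is the right notion here (not "non-isomorphic countable models"), and the count is genuinely of representations, consistent with the theorem statement which says "non-base isomorphic representations."

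\textbf{Main obstacle.} The hard part will be the second step — showing there are $2^\omega$ \emph{pairwise non-base-isomorphic} representations, not merely $2^\omega$ Henkin ultrafilters. Distinct ultrafilters can give base-isomorphic representations (the base structure on ${}^\omega\omega/F$ may have many automorphisms, and genuinely different Henkin ultrafilters may be intertwined by a permutation of $\omega$ that is invisible at the level of $\A$). So the real work is exhibiting a continuum-sized family together with an \emph{invariant} that separates them: I expect this requires choosing $\A$'s single generator carefully and arguing that the induced base structures realize continuum-many distinct "types" in a way no bijection of bases can reconcile — essentially a coding argument showing the map (ultrafilter) $\mapsto$ (base structure up to isomorphism) has image of size $2^\omega$. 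Controlling this while staying inside $Dc_\alpha$ (only finite substitutions available, so the weak-set-algebra representation is the only one directly at hand, and Theorem \ref{weak} must be invoked to square the unit) is the delicate point.
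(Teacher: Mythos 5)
Your proposal correctly locates the difficulty but does not overcome it: the entire lower bound --- exhibiting $2^{\omega}$ pairwise non-base-isomorphic representations rather than merely $2^{\omega}$ (Henkin) ultrafilters --- is deferred to a hoped-for ``coding argument'' in your ``main obstacle'' paragraph and is never carried out, and that step \emph{is} the content of the theorem. Worse, your choice to square the unit via Theorem \ref{weak} makes that step strictly harder: after squaring, the bases are of the form ${}^{\omega}U/F$ and have cardinality $2^{\omega}$, so an arbitrary bijection between two such bases is a candidate base isomorphism and you lose all control over which ultrafilters become identified. The invariant you would need to separate continuum many representations is exactly what is missing.

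The paper's proof avoids both problems by counting \emph{weak} representations and never squaring. For each ultrafilter $F$ of $\A$ (arbitrary, not just Henkin) it sets $h_F(a)=\{\tau\in{}^{\alpha}\alpha^{(Id)}:s_{\tau}a\in F\}$, which is injective by simplicity. The key rigidity fact, absent from your setup, is that a base isomorphism between two such weak set algebras must carry the weak unit ${}^{\alpha}\alpha^{(Id)}$ to itself and is therefore induced by a \emph{finite} permutation $\sigma$ of $\alpha$; one then checks that $h_F(\A)$ and $h_G(\A)$ are base isomorphic iff $s_{\sigma}F=G$ for some such $\sigma$. Thus the base-isomorphism classes are precisely the orbits of a countable group acting on the set of ultrafilters. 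That set has power $2^{\omega}$ because a finitely generated simple infinite $\A$ cannot be atomic, hence is not hereditary atomic, so Lemma \ref{b} applies (this replaces your Baire-category count of $\mathcal{H}(\A)$); each orbit is countable, so there are $2^{\omega}$ orbits. No coding construction or separating invariant is needed --- the theorem falls out of factoring an uncountable set by a countable group action.
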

\begin{proof} Let $V={}^{\alpha}\alpha^{(Id)}$ and let $\A$ be as in the hypothesis. Then $\A$ cannot be atomic \cite{HMT1} corollary 2.3.33,
least hereditary atomic. By \ref{b}, it has $2^{\omega}$ ultrafilters.

For an ultrafilter $F$, let $h_F(a)=\{\tau \in V: s_{\tau}a\in F\}$, $a\in \A$.
Then $h_F\neq 0$, indeed $Id\in h_F(a)$ for any $a\in \A$, hence $h_F$ is an injection, by simplicity of $\A$.
Now $h_F:\A\to \wp(V)$; all the $h_F$'s have the same target algebra.
We claim that $h_F(\A)$ is base isomorphic to $h_G(\A)$ iff there exists a finite bijection $\sigma\in V$ such that
$s_{\sigma}F=G$.
We set out to confirm our claim. Let $\sigma:\alpha\to \alpha$ be a finite bijection such that $s_{\sigma}F=G$.
Define $\Psi:h_F(\A)\to \wp(V)$ by $\Psi(X)=\{\tau\in V:\sigma^{-1}\circ \tau\in X\}$. Then, by definition, $\Psi$ is a base isomorphism.
We show that $\Psi(h_F(a))=h_G(a)$ for all $a\in \A$. Let $a\in A$. Let $X=\{\tau\in V: s_{\tau}a\in F\}$.
Let $Z=\Psi(X).$ Then
\begin{equation*}
\begin{split}
&Z=\{\tau\in V: \sigma^{-1}\circ \tau\in X\}\\
&=\{\tau\in V: s_{\sigma^{-1}\circ \tau}(a)\in F\}\\
&=\{\tau\in V: s_{\tau}a\in s_{\sigma}F\}\\
&=\{\tau\in V: s_{\tau}a\in G\}.\\
&=h_G(a)\\
\end{split}
\end{equation*}
Conversely, assume that $\bar{\sigma}$ establishes a base isomorphism between $h_F(\A)$ and $h_G(\A)$.
Then $\bar{\sigma}\circ h_F=h_G$.  We show that if $a\in F$, then $s_{\sigma}a\in G$.
Let $a\in F$, and let $X=h_{F}(a)$.
Then, we have
\begin{equation*}
\begin{split}
&\bar{\sigma\circ h_{F}}(a)=\sigma(X)\\
&=\{y\in V: \sigma^{-1}\circ y\in h_{F}(X)\}\\
&=\{y\in V: s_{\sigma^{-1}\circ y}a\in F\}\\
&=h_G(a)\\
\end{split}
\end{equation*}
Now we have $h_G(a)=\{y\in V: s_{y}a\in G\}.$
But $a\in F$. Hence $\sigma^{-1}\in h_G(a)$ so $s_{\sigma^{-1}}a\in G$, and hence $a\in s_{\sigma}G$.

Define the equivalence relation $\sim $ on the set of ultrafilters by $F\sim G$, if there exists a finite permutation $\sigma$
such that $F=s_{\sigma}G$. Then any equivalence class is countable, and so we have $^{\omega}2$ many orbits, which correspond to
the non base isomorphic representations of $\A$.
\end{proof}
The above theorem is not so  deep, as it might appear on first reading. The relatively simple
proof is an instance of the obvious fact that if a countable Polish group, acts on an uncountable Polish space, then the number of induced orbits
has the cardinality of the continuum, because it factors out an uncountable set by a countable one. In this case, it is quite easy to show
that  the Glimm-Effros Dichotomy holds.

\begin{theorem}
Let $T$ be a countable theory in a rich language, with only finitely many relation symbols,
and $\Gamma =\{\Gamma_i: i\in covK\}$ be non isolated types.
Then $T$ has $2^{\omega}$ weak models that omit $\Gamma$.
\end{theorem}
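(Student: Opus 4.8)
\emph{Overall plan.} The idea is to carry out the counting argument in the proof of Theorem~\ref{2} --- the enumeration of non--base--isomorphic representations of a countable simple finitely generated $Dc_{\alpha}$--algebra --- but inside the subspace of those ultrafilters whose associated weak model omits $\Gamma$. That subspace is essentially the one already isolated in Section~4; only the relevant equivalence relation changes, from the one comparing the sizes of the $Sat$-sets to the orbit relation of the group of finite permutations. \emph{Setup.} Put $\A=\Fm_T$, the Tarski--Lindenbaum (cylindric-- or quasi--polyadic--type) algebra of $T$. Richness of the language makes $\A$ simple; having only finitely many relation symbols makes $\A$ finitely generated; countability of the language makes $\A$ countable and puts it in $Dc_{\alpha}$ with $\alpha$ countable. (If $T$ is not complete, pass first to a complete rich extension, or work below a fixed maximal consistent element; the $\Gamma_i$ stay non--principal.) As noted in the proof of Theorem~\ref{2}, by \cite{HMT1} (Corollary 2.3.33) such an $\A$ is not atomic, hence not hereditary atomic, so by Lemma~\ref{b} the Stone space $\A^{*}$ carries $2^{\omega}$ ultrafilters; concretely, fix (as in the proof of that lemma) a splitting tree $(b_{s})_{s\in{}^{<\omega}2}$ of nonzero elements of $\A$ with $b_{s}=b_{s\frown 0}+b_{s\frown 1}$ and $b_{s\frown 0}\cdot b_{s\frown 1}=0$.

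\emph{The good ultrafilters and the count.} For an ultrafilter $F$ of $\A$ put $h_{F}(a)=\{\tau\in{}^{\alpha}\alpha^{(Id)}:s_{\tau}a\in F\}$. By (the proof of) Theorem~\ref{2}, $h_{F}$ is an injective homomorphism of $\A$ onto a weak set algebra with unit ${}^{\alpha}\alpha^{(Id)}$, hence onto a weak model of $T$, and $h_{F}(\A)$ is base isomorphic to $h_{G}(\A)$ iff $G=s_{\sigma}F$ for some finite permutation $\sigma$ of $\alpha$. The weak model $h_{F}(\A)$ realizes $\Gamma_{i}$ at some assignment iff $F\in\bigcup_{\tau}H_{i,\tau}$, where $\tau$ ranges over the countably many finite transformations of $\alpha$ and $H_{i,\tau}=\bigcap_{\varphi\in\Gamma_{i}}N_{s_{\tau}(\varphi/\equiv_{T})}$; since $\Gamma_{i}$ is non--principal, each $H_{i,\tau}$ is a closed nowhere dense subset of $\A^{*}$ (this is the computation already underlying the omitting--types theorem recalled above). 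Intersecting, if one wishes to work with genuinely regular weak models, with the dense $G_{\delta}$ set $\mathcal H(\A)$ (respectively $\mathcal H'(\A)$), and invoking --- exactly as in Section~4 --- the properties of $covK$ to replace the union $\bigcup_{i<\lambda}\bigcup_{\tau}H_{i,\tau}$ (with $\lambda<covK$) by a countable union of closed nowhere dense sets $\{D_{n}:n<\omega\}$, we obtain a dense $G_{\delta}$ set $\mathbb H=\mathcal H(\A)\setminus\bigcup_{n}D_{n}\subseteq\A^{*}$ such that every $F\in\mathbb H$ yields a weak model of $T$ omitting $\Gamma$. Now build a Cantor scheme inside $\mathbb H$: construct $F_{x}$ for $x\in{}^{\omega}2$ by finite approximations so that at stage $n$ one secures $b_{x\restriction n}\in F_{x}$, the $n$th dense--open requirement defining $\mathcal H(\A)$, and $F_{x}\notin D_{n}$. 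Distinct $x$ give distinct $F_{x}$, because $b_{x\restriction n}\cdot b_{y\restriction n}=0$ as soon as $x\restriction n\neq y\restriction n$; hence $|\mathbb H|=2^{\omega}$. Since the orbit relation ``$F=s_{\sigma}G$ for some finite permutation $\sigma$'' has only countable classes, $\mathbb H$ meets $2^{\omega}$ such orbits, i.e.\ $T$ has $2^{\omega}$ pairwise non--base--isomorphic weak models omitting $\Gamma$. (Alternatively, that orbit relation is easily seen to be Borel on the Polish space $\mathbb H$, so the Glimm--Effros dichotomy gives, as in Section~4, that the number of such weak models is $\le\omega$ or $2^{\omega}$, and non--hereditary--atomicity rules out the first alternative.)

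\emph{Main obstacle.} The one genuinely delicate point is the interleaving in the last step: meeting, along a single $\omega$--length recursion, the countably many Baire--category requirements defining $\mathcal H(\A)$ together with all the ``omit $\Gamma$'' constraints. This hinges entirely on the $covK$--reduction of Section~4 --- a priori that family of constraints is indexed by $\lambda<covK$, which may be uncountable --- and once that reduction is granted the interleaving is the routine Cantor--scheme bookkeeping, the rest being a transcription of the proofs of Theorem~\ref{2} and Lemma~\ref{b}. A minor accompanying verification, the same one that enters the omitting--types theorem itself, is that $H_{i,\tau}$ is nowhere dense for \emph{every} finite transformation $\tau$ (not merely injective ones), i.e.\ that applying a finite substitution to the non--principal family $\Gamma_{i}$ still yields a family with infimum $0$ in $\A$ --- trivially so when the substitution collapses a pair of variables separated by $\Gamma_{i}$, and otherwise exactly as in the locally finite case.
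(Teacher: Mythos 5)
The paper states this theorem with no proof at all (only the remark that it generalizes an earlier existence result), so there is nothing to match you against line by line; your reconstruction --- running the orbit-counting argument of Theorem \ref{2} inside the omitting-types subspace $\mathbb H$ of Section 4, with the $covK$ reduction turning the family of ``realize $\Gamma_i$ via $\tau$'' constraints into countably many closed nowhere dense sets --- is clearly the route the authors intend, and the identification of base-isomorphism classes with orbits of the (countable, since $\alpha$ is countable here) group of finite permutations is correctly imported from Theorem \ref{2}. Note also that the hypothesis as printed indexes $\Gamma$ by $covK$ itself; you tacitly and correctly read it as \emph{fewer than} $covK$ types, which is what the reduction to a countable union actually requires.

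The one step that would fail as written is the Cantor scheme. You fix a splitting tree $(b_s)_{s\in{}^{<\omega}2}$ in advance and then try, along each branch, to also meet the dense open Henkin requirements and avoid the $D_n$. But after shrinking your current stem $c\leq b_{x\restriction n}$ to some nonzero $c'$ with $N_{c'}$ inside the $n$-th dense open set, you only know $c'\cdot b_{x\restriction n\frown 0}\neq 0$ \emph{or} $c'\cdot b_{x\restriction n\frown 1}\neq 0$; the refinement may land entirely inside one successor and kill the other branch, so the scheme need not produce $2^{\omega}$ ultrafilters. The repair is standard but should be said: non-atomicity of $\A$ (the HMT1 fact already invoked in Theorem \ref{2}) yields a nonzero $b_0$ with no atom below it, so the relative algebra below $b_0$ is atomless; build the tree \emph{adaptively} below $b_0$, at each node first shrinking to a nonzero element whose basic clopen set lies in the next Henkin requirement and in the complement of the next $D_n$ (possible since these are dense open, hence dense below $b_0$), and only then splitting the result into two disjoint nonzero pieces (possible by atomlessness below $b_0$). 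With that modification every branch yields a distinct ultrafilter in $\mathbb H$, $|\mathbb H|=2^{\omega}$, and your orbit count goes through. Your alternative via Glimm--Effros has the same hidden dependence: to exclude the countable alternative you still need $|\mathbb H|=2^{\omega}$, so it does not bypass this construction.
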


Notice that this theorem substantially generalized the theorem in [SayedAMLQ], the latter shows that there exists at least one model omiytting non principal types, this theorem says that there are continuum many of 
them.

\subsection{Omitting types for finite variable fragments}

For finite variable fragments $\L_n$ for $n\geq 3$, the situation turns out to be drastically different.
But first a definition.
\begin{definition}
Assume that $T\subseteq \L_n$. We say that $T$ is $n$ complete iff for all sentences $\phi\in \L_n$ 
we have either $T\models \phi$ or $T\models \neg \phi$. We say that $T$
is $n$ atomic iff for all $\phi\in \L_n$, there is $\psi\in \L_n$ such that $T\models \psi\to \phi$ and for all $\eta\in \L_n$ either $T\models \psi\to \eta$
or $T\models \psi\to \neg \eta.$
\end{definition}

The next theorem \ref{finite} is proved using algebraic logic in \cite{ANT}, 
using combinatorial techniques depending on Ramsey's theorem. 

\begin{theorem}\label{finite} Assume that $\L$ is a countable first order language containing a binary relation symbol. For $n>2$ and 
$k\geq 0$,  there are a consistent $n$ complete
and $n$ atomic theory $T$ using only $n$ variables, and a set $\Gamma(x_1)$ using only $3$ variables
(and only one free variable) such that $\Gamma$ is realized in all models of $T$ but each $T$-witness for $T$ uses
more that $n+k$ variables
\end{theorem}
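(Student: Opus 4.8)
The plan is to transfer the problem to cylindric algebras of dimension $n$ and then invoke the construction of \cite{ANT}, whose engine is a Ramsey‑theoretic colouring argument; below I only outline the strategy.

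\textbf{Reduction to algebraic logic.} Given a consistent $\L_n$-theory $T$ whose only nonlogical symbol is binary, its $n$-dimensional Tarski--Lindenbaum algebra $\A=\Fm_n/T$ is a member of $\RCA_n$ (because $T$ has models); it is \emph{simple} precisely when $T$ is $n$-complete, \emph{atomic} precisely when $T$ is $n$-atomic, and it is generated by a single element since there is just one binary relation. A set $\Gamma(x_1)$ of $\L_3$-formulas becomes a subset $X$ of the $3$-dimensional part of $\A$, and a $T$-witness for $\Gamma$ using $m$ variables is exactly a nonzero element of $\Fm_m/T$ lying below every member of $X$. Finally, since every first-order formula uses only finitely many variables, the omitting types theorem for full first-order logic says that $\Gamma$ is realized in every model of $T$ iff $\Gamma$ is isolated over $T$, i.e. iff \emph{some} $T$-witness for $\Gamma$ exists at all. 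So it suffices to build a simple, atomic, singly generated $\A\in\RCA_n$, presenting a theory $T$, together with a $3$-dimensional set $X=\Gamma$ such that: (a) $\A$ has \emph{no} complete representation — forcing $\Gamma$ to be realized in every model of $T$, whence some $T$-witness exists; and (b) $\prod X=0$ in the $(n+k)$-variable Lindenbaum algebra $\Fm_{n+k}/T$ (which is generated by $\A$) — forbidding any $T$-witness with at most $n+k$ variables.

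\textbf{The algebra and the type.} I would obtain $\A$ from a finite symmetric relation algebra $\mathfrak{R}$ by a rainbow/Monk-style construction whose atom structure is a colored graph, the number of ``forbidden'' colors tuned to $n+k$. The colouring is arranged so that: (i) every consistently labelled network on at most $n+k$ nodes can be amalgamated, which yields $(n+k)$-square representations, places $\A$ in $\SNr_n\CA_{n+k}$, and — together with an explicit \emph{blur}, a prescribed family of witnesses — both pins $\A$ honestly inside $\RCA_n$ and guarantees that the relevant atoms of $\A$ still sum up to the relevant unit inside $\Fm_{n+k}/T$; but (ii) an \emph{unbounded} amalgamation is impossible: by Ramsey's theorem any attempt to build arbitrarily large consistent networks produces an infinite monochromatic set in a forbidden color, equivalently the existential player loses the $\omega$-round atomic game on $\A$, so $\A$ has no complete representation. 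Take a suitable countable, simple, $1$-generated atomic subalgebra (or the term algebra of the atom structure) and let $T$ be the $\L_n$-theory it presents. Since $\A$ is atomic, put $\Gamma(x_1):=\{\neg a(x_1): a\ \text{one of these atoms}\}$; by the construction each $a$ is $\L_3$-definable, so $\Gamma$ uses only three variables, and $\prod X=0$ in $\A$. A model of $T$ omits $\Gamma$ iff it has a point below no atom, i.e. iff the corresponding representation of $\A$ is complete (on the relevant part); as $\A$ has no complete representation, $\Gamma$ is realized in every model, so by the reduction a $T$-witness exists; and by (i), $\prod X=0$ persists in $\Fm_{n+k}/T$, so no $T$-witness uses $\leq n+k$ variables. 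This gives the theorem.

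\textbf{Main obstacle.} The delicate point is the clash between (i) and (ii): the colouring must make networks on $\leq n+k$ nodes freely amalgamable — both to keep $\A$ genuinely representable via the blur and to keep $\prod X=0$ through the passage to $\Fm_{n+k}/T$ — while \emph{simultaneously} blocking arbitrarily large consistent networks by a Ramsey argument, so as to destroy all complete representations. Producing one finite relation algebra whose rainbow atom structure exhibits exactly this ``$(n+k)$-local but not global'' behavior, and then verifying true $\RCA_n$-membership rather than mere $\CA_n$-membership, is the technical heart of \cite{ANT}; the remainder is the routine dictionary between $n$-variable theories and $\CA_n$'s set out in the first paragraph.
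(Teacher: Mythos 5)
Your proposal follows essentially the same route as the paper, which in fact gives no argument of its own for Theorem \ref{finite} beyond citing \cite{ANT}; your outline correctly reconstructs that reference's strategy (the dictionary between $n$-variable theories and $\CA_n$'s, a singly generated atomic algebra lying in $\SNr_n\CA_{n+k}$ but admitting no complete representation via a Ramsey-type colouring, and the coatom type as $\Gamma$). Since both you and the paper defer the actual rainbow/blur construction to \cite{ANT}, there is nothing substantive to compare beyond that.
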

\begin{demo}{Proof} \cite{sayed}.
\end{demo}

Algebraisations of finite variable fragments of first order logic with $n$ variables is 
obtained from locally finite algebras by truncating the dimensions at $n$.
Expressed, formally this corresponds to the operation of forming $n$ neat reducts.

\begin{definition}\cite{HMT1} Let $\A\in \CA_{\beta}$ and $\alpha<\beta$, then the {\it $\alpha$ neat reduct of $\A$} is the algebra obtained 
from $\A$ by discarding operations in $\beta\sim \alpha$ and restricting the remaining operations 
to the set consisting only of $\alpha$ dimensional elements. An element is $\alpha$ dimensional if its dimension set, $\Delta x=\{i\in \beta: c_ix\neq x\}$ 
is contained in $\alpha$. Such an algebra is denoted by $\Nr_{\alpha}\A$.
\end{definition}
For a class $K\subseteq \CA_{\beta}$, $\Nr_{\alpha}K=\{\Nr_{\alpha}\A: A\in K\}$. It is easy to verify that $\Nr_{\alpha}K\subseteq \CA_{\alpha}$.

A class of particular importance, is the class $S\Nr_{\alpha}\CA_{\alpha+\omega}$ where $\alpha$ is an arbitrary ordinal; 
here $S$ stands for the operation of forming subalgebras. 
This class turns out to be a variety which coincides with
the class of {\it representable} algebras of dimension $\alpha$.

Another class that is of significance is the class $S_c\Nr_{\alpha}CA_{\alpha+\omega}$. Here $S_c$ 
is the operation of forming {\it complete} subalgebras. 
(A Boolean algebra $\A$ is a complete subalgebra of $\B$, if for all $X\subseteq \A$, whenever $\sum X=1$ in $A$, then 
$\sum X=1$ in $\B)$. This class is important because a countable cylindric algebra of dimension $n$ is completey representable if and only if
$\A\in S_c\Nr_n\CA_{\omega}$, for any $\alpha\geq \omega$ and $\A$ is atomic. 
This characterization even works for countable dimensions, 
by modifying the notion of complete representation' relativizing the unit to so-called weak units.

And it turns out for finite  variable fragments, that for a theory $T$ to omit types, 
whether countably or uncountably many, a sufficient condition is that 
the cylindric algebra of formulas $\Fm_T$ is in the class $S_c\Nr_n\CA_{\omega}$. 
Furthermore, the condition of  {\it complete} subalgebras, cannot be omitted.

\begin{lemma} Suppose that $T$ is a theory, 
$|T|=\lambda$, $\lambda$ regular, then there exist models $M_i: i<\chi={}^{\lambda}2$, each of cardinality $\lambda$, 
such that if $i(1)\neq i(2)< \chi$, $\bar{a}_{i(l)}\in |M_{i(l)}|$, $l=1,2,$, $\tp(\bar{a_{l(1)}})=\tp(\bar{a_{l(2)}})$, 
then there are $p_i\subseteq \tp(\bar{a_{l(i)}}),$ $|p_i|<\lambda$ and $p_i\vdash \tp(\bar{a_ {l(i)}}).$
\end{lemma}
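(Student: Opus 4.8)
The plan is to realise the $M_i$ as the branch models of a \emph{coding tree}, in the style of Shelah's constructions producing $2^{\lambda}$ models. First I would dispose of the trivial case (if $T$ has no infinite model there is nothing to do, $\lambda\ge\aleph_0$ being the only case of interest), and then pass to a Skolemisation $T^{\mathrm{sk}}\supseteq T$ in a language $L^{\mathrm{sk}}$ with $|L^{\mathrm{sk}}|=\lambda$; working inside a monster model of $T^{\mathrm{sk}}$, the Skolem hull $\mathrm{Sk}(X)$ of any set $X$ is then an elementary submodel of cardinality $|X|+\lambda$. All the later bookkeeping will exploit that every element of $M_i$ is a Skolem term in finitely many generators.

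Next I would build, by transfinite recursion of length $\lambda$, a coherent tree of elements $\langle b_{\rho}:\rho\in{}^{\lambda>}2\rangle$ together with a fixed family of definable ``level markers'' $\langle\chi_{\gamma}:\gamma<\lambda\rangle$, arranging two properties. \emph{Genericity}: for every $\beta<\lambda$ and every $\eta\in{}^{\lambda}2$, the joint type over $\emptyset$ of the initial chain $\langle b_{\eta\restriction\alpha}:\alpha<\beta\rangle$ is $(<\lambda)$-isolated modulo $T$, and each tuple of $\mathrm{Sk}(\{b_{\eta\restriction\alpha}:\alpha<\beta\})$ has type over $\{b_{\eta\restriction\alpha}:\alpha<\beta\}$ isolated by a single formula; by transitivity of isolation over parameter sets of size $<\lambda$, this forces every $(<\lambda)$-tuple of $\mathrm{Sk}(\{b_{\eta\restriction\alpha}:\alpha<\beta\})$ to have $(<\lambda)$-isolated type over $\emptyset$. \emph{Rigidity}: the markers are placed so that the type of a configuration that uses a cofinal set of levels of the $\eta$-branch recovers $\eta$ itself, hence no such type can be realised along any other branch. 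The existence of such a tree is the standard Erd\H{o}s--Rado plus compactness construction built on an Ehrenfeucht--Mostowski template for $T^{\mathrm{sk}}$, the markers being added to make levels definable; this is the only place where genuine combinatorics enters, and it is where the argument leans on the classification-theoretic ``many models'' machinery. For $\eta\in{}^{\lambda}2$ set $M_{\eta}=\mathrm{Sk}(\{b_{\eta\restriction\alpha}:\alpha<\lambda\})$, so $|M_{\eta}|=\lambda$, and let $\langle M_{i}:i<{}^{\lambda}2\rangle$ enumerate these models.

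Then I would verify the conclusion. Suppose $i(1)=\eta\neq\nu=i(2)$, and let $\bar a\in|M_{\eta}|$, $\bar a'\in|M_{\nu}|$ be tuples of length $<\lambda$ with $\tp(\bar a)=\tp(\bar a')$. Each coordinate of $\bar a$ is a Skolem term in finitely many $b_{\eta\restriction\alpha}$, so $\bar a$ lies in $\mathrm{Sk}(\{b_{\eta\restriction\alpha}:\alpha\in S\})$ for some $S\subseteq\lambda$ with $|S|\le|\bar a|<\lambda$; since $\lambda$ is regular, $\sup S=\beta_{1}<\lambda$, and likewise $\bar a'\in\mathrm{Sk}(\{b_{\nu\restriction\alpha}:\alpha<\beta_{2}\})$ for some $\beta_{2}<\lambda$. (Were one of the tuples to use a cofinal set of levels, Rigidity would let its type over $\emptyset$ recover the whole branch, and $\tp(\bar a)=\tp(\bar a')$ would then force $\eta=\nu$; so this case cannot occur.) By Genericity, $\tp(\bar a)$ is isolated modulo $T$ by a subset $p_{1}\subseteq\tp(\bar a)$ with $|p_{1}|<\lambda$, and $\tp(\bar a')$ by some $p_{2}\subseteq\tp(\bar a')$ with $|p_{2}|<\lambda$; since the two types coincide this is exactly the required $p_{i}\vdash\tp(\bar a_{l(i)})$.

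I expect the main obstacle to be the construction of the coding tree with the Genericity and Rigidity clauses holding simultaneously: genericity pushes the generators to be as indiscernible as possible and their Skolem-term types as $(<\lambda)$-simple as possible, whereas rigidity requires enough non-homogeneity along each branch that distinct branches cannot accidentally share a ``large'' type. Reconciling the two --- and in particular arranging that bounded sub-configurations of a branch have $(<\lambda)$-isolated types over $\emptyset$ modulo an \emph{arbitrary} $T$ --- is the technical core, and it is handled by the Ramsey-theoretic apparatus underlying Shelah-style ``many models'' theorems. Everything else (the reductions, the L\"owenheim--Skolem and Skolem-term bookkeeping, and the use of regularity of $\lambda$ to confine $(<\lambda)$-tuples to bounded initial segments of their branch) is routine.
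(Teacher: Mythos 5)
The paper does not actually prove this lemma: its ``proof'' is the single citation ``Shelah, Theorem 5.16,'' so there is no in-paper argument to compare yours with. Judged on its own terms, your plan has a structural flaw rather than a missing detail. Your Genericity clause, combined with the regularity of $\lambda$, makes the Rigidity clause and the hypothesis $\tp(\bar a_{i(1)})=\tp(\bar a_{i(2)})$ do no work at all: every tuple of length $<\lambda$ in $M_{\eta}$ is built from Skolem terms in $<\lambda$ generators, hence (as you yourself observe) lies in $\mathrm{Sk}(\{b_{\eta\restriction\alpha}:\alpha<\beta\})$ for some $\beta<\lambda$, and Genericity then forces its type over $\emptyset$ to be $(<\lambda)$-isolated. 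So your construction would produce $(<\lambda)$-atomic models, in which \emph{every} realized type is $(<\lambda)$-isolated, with no reference to a second model. For $\lambda=\aleph_{0}$ this means atomic models, and these need not exist: take $T$ to be the theory of countably many independent unary predicates, where no complete $1$-type is isolated, so no model realizes only isolated types --- yet the lemma must still hold for this $T$. Hence a tree with your Genericity property cannot be built for arbitrary $T$, and no amount of Erd\H{o}s--Rado plus compactness on an Ehrenfeucht--Mostowski template will supply it; note also that EM-style control naturally isolates the type of a tuple by a set of $\le|T|=\lambda$ formulas, and getting down to $<\lambda$ is exactly the delicate point.

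The correct shape of the statement is weaker, and that is where the difficulty sits: each $M_{i}$ \emph{must} be allowed to realize non-$(<\lambda)$-isolated types (in the example above every type it realizes is such); what has to be arranged is that two \emph{distinct} models never share one. So the individuality of the branches has to be carried by the bounded configurations themselves --- each $M_{\eta}$ realizing its own private family of complicated types --- rather than pushed off to cofinal configurations, which, as your own regularity argument shows, never occur in the support of a $(<\lambda)$-tuple. Arranging this pairwise disjointness of the realized non-$(<\lambda)$-isolated types across $2^{\lambda}$ models is the real content of Shelah's theorem, and it is precisely the part your sketch delegates to unspecified ``Ramsey-theoretic apparatus.'' As it stands the proposal proves a stronger, false statement in its easy steps and leaves the true statement's only hard step unaddressed.
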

\begin{proof} \cite{Shelah} Theorem 5.16
\end{proof}
\begin{corollary} For any countable theory, there is a family of $< {}^{\omega}2$ countable models that overlap only on principla types
\end{corollary}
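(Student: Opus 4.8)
The plan is to deduce the Corollary from the preceding Lemma by translating the model-theoretic statement about types into the language of that Lemma and then quotienting out an isomorphism argument. First I would apply the Lemma to the given countable theory $T$, taking $\lambda = \omega$ (which is regular). The Lemma then produces a family $\{M_i : i < {}^{\omega}2\}$ of models, each of countable cardinality, with the property that for $i(1) \neq i(2)$, any tuples $\bar a_{i(1)} \in |M_{i(1)}|$ and $\bar a_{i(2)} \in |M_{i(2)}|$ realizing the same type must have that type be generated (over $T$) by a subset of size $< \omega$, i.e. by a finite subset. A type over $T$ in finitely many variables that is generated by a finite subset of itself is exactly a principal (isolated) type. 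So the conclusion of the Lemma, read off for $\lambda = \omega$, says precisely: whenever two of the $M_i$ realize a common complete type, that type is principal; equivalently, the only complete types realized by more than one model in the family are the principal ones. This is the assertion ``the models overlap only on principal types.''

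The second step is to pass from the family of size ${}^{\omega}2$ to one of size $< {}^{\omega}2$ while retaining the overlap property — but in fact this is immediate: any subfamily of $\{M_i : i < {}^{\omega}2\}$ inherits the property verbatim, since the conclusion is a statement quantified over pairs $i(1) \neq i(2)$ from the index set, so restricting the index set only weakens the hypothesis range. Taking, say, any subfamily indexed by $\omega_1 < {}^{\omega}2$ (or indeed by ${}^{\omega}2$ itself if one reads ``$<{}^{\omega}2$'' loosely as ``at most'') gives a family of the required size. Here I would be a little careful about the intended reading of ``$< {}^{\omega}2$ countable models'': the natural interpretation is that one wants a large (uncountable) family, and the cardinality bound is simply inherited from the Lemma; I would state the Corollary's family as indexed by an ordinal $\chi \le {}^{\omega}2$ and note that one may take $\chi$ as large as ${}^{\omega}2$.

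The main obstacle, such as it is, is bookkeeping rather than mathematics: one must match up Shelah's formulation (finite substet $p_i$ with $p_i \vdash \tp(\bar a_{l(i)})$) with the model-theoretic notion of a principal type, and confirm that ``countable'' in the Corollary matches ``of cardinality $\lambda = \omega$'' in the Lemma. There is also a minor subtlety in that the Lemma as stated speaks of tuples with $\tp(\bar a_{i(1)}) = \tp(\bar a_{i(2)})$ and concludes something about $p_i \subseteq \tp(\bar a_{l(i)})$ for each $i$ separately; I would observe that since the two types are equal and each is generated by a finite fragment, the common type is principal, which is all that is needed. No heavy machinery beyond the cited Lemma is required; the proof is essentially: apply the Lemma with $\lambda = \omega$, observe finitely-generated $=$ principal for types in finite variables, and restrict the index set if a strict bound is wanted.
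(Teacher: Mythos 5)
Your proposal is correct and is exactly the derivation the paper intends: the corollary is stated as an immediate consequence of the preceding lemma (Shelah's theorem) with $\lambda=\omega$, where ``generated by a set of size $<\omega$'' becomes ``principal,'' and ``cardinality $\lambda$'' becomes ``countable.'' Your remark about the awkward ``$<{}^{\omega}2$'' in the statement (the lemma actually yields a family of size exactly $2^{\aleph_0}$) is a fair reading of what appears to be a typo rather than a mathematical issue.
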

\begin{theorem}\label{uncountable} Let $\A=S_c\Nr_n\CA_{\omega}$. Assume that $|A|=\lambda$, where $\lambda$ is an ucountable 
cardinal. assume that  $\kappa< {}^{\lambda}2$,
and that $(F_i: i<\kappa)$ is a system of non principal ultrafilters.
Then there exists
a set algebra $\C$ with base $U$ such that $|U|\leq \lambda$, $f:\A\to \C$ such that $f(a)\neq 0$ and for all $i\in \kappa$, $\bigcap_{x\in X_i} f(x)=0.$

\end{theorem}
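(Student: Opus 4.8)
Write the hypothesis as $\A\in S_c\Nr_n\CA_\omega$, and fix $\B\in\CA_\omega$ with $\A\subseteq_c\Nr_n\B$, the inclusion being a \emph{complete} subalgebra embedding; replacing $\B$ by the subalgebra it generates we may assume $\B$ is locally finite with $|B|=\lambda$. Because $\A$ is a complete subalgebra of $\Nr_n\B$ and each $X_i\subseteq A$, the product $\prod X_i=0$ computed in $\A$ remains $0$ in $\B$. Identifying $\B$ with the cylindric algebra $\Fm_T$ of a first-order theory $T$ in a language with $\omega$ variables and $|T|=\lambda$, the family $(F_i:i<\kappa)$ becomes a system of $\kappa<2^{\lambda}$ non-principal complete $n$-types of $T$, and a representation $f\colon\A\to\C$ with $|U|\le\lambda$, $f(a)\neq0$ and $\bigcap_{x\in X_i}f(x)=0$ for all $i$ is precisely (the restriction to $\A$ of the cylindric set algebra attached to) a model $M$ of $T$ of cardinality $\le\lambda$ with $M\models\exists\bar x\,a$ that omits every $F_i$. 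So it suffices to produce such a model; for the clause $f(a)\neq0$ one may, if desired, first replace $T$ by a complete extension satisfying $\exists\bar x\,a$ (or, as in Theorem \ref{2}, simply assume $\A$ simple), checking that the $F_i$ remain non-principal.

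\textbf{Producing many models and counting.} Apply the preceding Lemma (Shelah, \cite{Shelah} Thm.\ 5.16) to $T$ --- note that $\lambda$ is regular --- to obtain models $(M_j:j<2^{\lambda})$ of $T$, each of cardinality $\le\lambda$, such that whenever $j_1\neq j_2$ and tuples $\bar a_l\in M_{j_l}$ satisfy $\tp(\bar a_1)=\tp(\bar a_2)$, that common type is isolated over $T$ by a set of formulas of cardinality $<\lambda$. The key claim is then that each $F_i$, being non-principal, cannot be isolated over $T$ by a subset of size $<\lambda$; granting this, the Lemma forces each $F_i$ to be realized in \emph{at most one} of the $M_j$ (two distinct ones both realizing $F_i$ would make $F_i$ be $<\lambda$-isolated). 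Hence at most $\kappa<2^{\lambda}$ of the $M_j$ realize some $F_i$, so $2^{\lambda}$-many of them omit \emph{all} of $F_0,\dots,F_i,\dots$; choosing one such $M$ that moreover satisfies $\exists\bar x\,a$ (automatic if $T$ is complete) and letting $\C$ be the cylindric set algebra on base $U=|M|$ with $f$ the evaluation map gives $|U|\le\lambda$, $f(a)\neq0$, and $\bigcap_{x\in X_i}f(x)=0$ for every $i<\kappa$, as required.

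\textbf{The main obstacle.} The analytic heart of the argument is the claim used in the counting step: that the algebraic hypothesis ``the $F_i$ are non-principal ultrafilters of $\Nr_n\A$'' translates, after the passage to $\B=\Fm_T$, into ``as types of $T$ they are not supported by fewer than $\lambda$ formulas'', so that Shelah's dichotomy bites. This is exactly where the \emph{complete}-subalgebra condition $S_c$ is essential, since it is what transfers non-principality of $X_i$ from $\A$ to $\B$; a genuinely partial, merely finitely-non-principal type over a language of size $\lambda$ could in principle be pinned down by $\omega$ formulas and would \emph{not} be excluded by the Lemma, so the interpretation of ``non-principal'' here must be taken (or shown to be) the stronger one. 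The remaining ingredients --- the cardinal bookkeeping $|2^{\lambda}\setminus\kappa|=2^{\lambda}$, and the routine verification that a model of $T$ of size $\le\lambda$ omitting the $F_i$ yields a set-algebra representation of $\A$ of the stated kind --- are straightforward.
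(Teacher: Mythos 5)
Your proof follows essentially the same route as the paper's: pass to a locally finite $\B\in\CA_\omega$ with $\A\subseteq_c\Nr_n\B$ of the same cardinality, identify $\B$ with $\Fm_T$, apply Shelah's lemma to produce $2^{\lambda}$ models of $T$ in which any given non-principal $F_i$ can be realized at most once, conclude by counting that some model omits all the $F_i$ (the paper runs this as a contradiction, you run it as a direct count, but it is the same argument), and pull the resulting representation back to $\A$ by restricting $\omega$-sequences to their first $n$ coordinates. The one subtlety you flag --- that ``non-principal ultrafilter'' must be read strongly enough that being supported by $<\lambda$ formulas contradicts it --- is glossed over in the paper's own proof as well, which simply asserts that a type realized in two of Shelah's models ``will be principal.''
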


\begin{proof} Let $\A\subseteq_c \Nr_n\B$, where $\B$ is $\omega$ dimensional, locally finite and has the same cardinality as $\A$. 
This is possible by taking $\B$ to be the subalgebra of which $\A$ is a strong neat reduct generated by  $A$, and noting that we gave countably many 
operations. 
The $F_i$'s correspond to maximal $n$ types in the theory $T$ corresponding to $\B$, that is, the first order theory $T$ such that $\Fm_T\cong \B$. 
Applying Shelah result  let ${\bold F}$ be the given set of non principal ultrafilters, with no model omitting them. 
Then for all $i<{}^{\omega}2$,
there exists $F$ such that $F$ is realized in $\B_i$. Let $\psi:{}^{\omega}2\to \wp(\bold F)$, be defined by
$\psi(i)=\{F: F \text { is realized in  }\B_i\}$.  Then for all $i<{}^{\omega}2$, $\psi(i)\neq \emptyset$.
Furthermore, for $i\neq j$, $\psi(i)\cap \psi(j)=\emptyset,$ for if $F\in \psi(i)\cap \psi(j)$ then it will be realized in
$\B_i$ and $\B_j$, and so it will be principal.of the existence of $^{\lambda}2$ representations of $\B$ and restricting to ultrafilters (maximal types)  in $\Nr_n\B$, 
together  with argument (ii) above, gives a a representation with base $\M$, of the big algebra $\B$, via an injective homomorphism $g$,
omitting the given maximal types. For a sequence $s$ with finite length let $s^+=s\cup id$.
Define $f:\A\to \wp({}^{n}\M)$ via $a\mapsto \{s\in {}^nM: s\cup Id\in f(a)\},$ then clearly $f$ is as desired.
  This implies that $|\bold F|={}^{\omega}2$ which is impossible.
\end{proof}

Now one metalogical reading of the last two theorems is 

\begin{theorem} Let $T$ be an $\L_n$ consistent theory that admits elimination of quantifiers. 
Assume that $|T|=\lambda$ is a regular cardinal.
Let $\kappa<2^{\lambda}$. Let $(\Gamma_i:i\in \kappa)$ be a set of non-principal maximal types in $T$. Then there is a model $\M$ of $T$ that omits
all the $\Gamma_i$'s
\end{theorem}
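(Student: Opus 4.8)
The plan is to reduce the statement to Theorem \ref{uncountable} by passing to the algebra of formulas. First I would form $\A=\Fm_T$, the $n$-dimensional cylindric algebra of formulas of $T$; since $|T|=\lambda$ we have $|A|=\lambda$. The decisive structural point is that, \emph{because $T$ admits elimination of quantifiers}, $\A\in S_c\Nr_n\CA_{\omega}$. Indeed, letting $\B\in\Lf_{\omega}$ be the $\omega$-dimensional locally finite formula algebra of $T$ (viewed, with the same signature, as a theory in the language with $\omega$ variables), one identifies $\A$ with $\Nr_n\B$, and the quantifier-free normal forms provided by elimination of quantifiers guarantee that every supremum attained in $\B$ is already attained in $\A$, i.e. $\A\subseteq_c\B$; hence $\A\in S_c\Nr_n\CA_{\omega}$. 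This is exactly where the hypothesis of quantifier elimination enters, and, in accordance with the remark above that the condition of \emph{complete} subalgebras cannot be weakened, it is precisely what makes the omitting-types machinery applicable.

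Next I would match the model-theoretic data with the algebraic data. Via the Lindenbaum map, a maximal non-principal type $\Gamma_i$ of $T$ corresponds to a non-principal ultrafilter $F_i$ of $\A$: the set $X_i=\{\varphi/\!\equiv_T:\varphi\in\Gamma_i\}$ generates an ultrafilter (maximality of the type) with $\prod X_i=0$ (non-principality). Thus $(\Gamma_i:i<\kappa)$ gives a system $(F_i:i<\kappa)$ of non-principal ultrafilters of $\A$, and since $|A|=\lambda$ is a regular cardinal and $\kappa<2^{\lambda}$, the hypotheses of Theorem \ref{uncountable} are satisfied for $\A$, $\lambda$, $\kappa$ and $(F_i:i<\kappa)$ --- taking for the ``non-zero $a$'' the unit $1$ of $\A$, which is non-zero because $T$ is consistent.

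Finally I would invoke Theorem \ref{uncountable} and read off a model. It supplies a set algebra $\C$ with base $U$, $|U|\leq\lambda$, and a homomorphism $f:\A\to\C$ with $f(1)\neq 0$ and $\bigcap_{x\in X_i}f(x)=0$ for every $i<\kappa$. Since $f$ is a cylindric-algebra homomorphism into a set algebra with base $U$, it sends the top of $\A$ to the full unit ${}^{n}U$ of $\C$, so $f$ is (the Lindenbaum image of) the satisfaction map of a structure $\M$ with universe $U$, where each relation symbol $r$ is interpreted by $f(r/\!\equiv_T)$. As $f$ preserves all the operations, $\M\models T$; and for each $i<\kappa$ the equality $\bigcap_{x\in X_i}f(x)=0$ says exactly that no tuple of $\M$ satisfies all the formulas in $\Gamma_i$, i.e. $\M$ omits $\Gamma_i$. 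Hence $\M$ is a model of $T$ omitting every $\Gamma_i$, as required. The step I expect to demand real care is the first one: verifying that elimination of quantifiers genuinely places $\Fm_T$ inside $S_c\Nr_n\CA_{\omega}$, and especially that the neat reduct sits there as a \emph{complete} subalgebra; once this is in hand, the remainder is routine bookkeeping on top of Theorem \ref{uncountable}.
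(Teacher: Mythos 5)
Your proposal follows essentially the same route as the paper: form $\A=\Fm_T$, use quantifier elimination to identify $\A$ with $\Nr_n\B$ where $\B$ is the $\omega$-variable formula algebra, conclude $\A\in S_c\Nr_n\CA_{\omega}$, and then apply Theorem \ref{uncountable} to the non-principal ultrafilters arising from the maximal types. The only cosmetic difference is that your detour through verifying $\A\subseteq_c\B$ is unnecessary (and slightly misstates the direction of preservation of suprema): since the Lindenbaum map $\phi/T\mapsto\phi/T_{\omega}$ is \emph{onto} $\Nr_n\B$ by quantifier elimination, one gets $\A\in\Nr_n\CA_{\omega}\subseteq S_c\Nr_n\CA_{\omega}$ directly, which is exactly what the paper does.
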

\begin{demo}{Proof} If $\A=\Fm_T$ denotes the cylindric algebra corresponding to $T$, then since $T$ admits elimination of quantifiers, then
$\A\in \Nr_n\CA_{\omega}$. This follows from the following reasoning. Let $\B=\Fm_{T_{\omega}}$ be the locally finite cylindric algebra
based on $T$ but now allowing $\omega$ many variables. Consider the map $\phi/T\mapsto \phi/T_{\omega}$. 
Then this map is from $\A$ into $\Nr_n\B$. But since $T$ admits elimination of quantifiers the map is onto.
The Theorem now follows.
\end{demo}

We now give another natural omitting types theorem for certain uncountable languages.
Let $L$ be an ordinary first order language with 
a list $\langle c_k\rangle$ of individual constants
of order type $\alpha$. $L$ has no operation symbols, but as usual, the
list of variables is of order type $\omega$. Denote by $Sn^{L_{\alpha}}$ the set
of all $L$ sentences, the subscrpt $\alpha$ indicating that we have $\alpha$ many constants Let $\alpha=n\in \omega$. 
Let $T\subseteq Sn^{L_0}$ be consistent. Let $\M$  be an $\L_0$  model of $T$.  Then any $s:n\to M$ defines an expansion of $\M$ to $L_n$ which we denote by $\M{[s]}$.  
For $\phi\in L_n$ let $\phi^{\M}=\{s\in M^n: \M[s]\models \phi\}$. Let 
$\Gamma\subseteq Sn^{L_{n}}$. 
The question we adress is: Is there a model $\M$ of $T$ such that for no expansion $s:n\to M$ we have 
$s\in  \bigcap_{\phi\in \Gamma}\phi^{M}$. 
Such an $\M$ omits $\Gamma$. Call $\Gamma$ principal over $T$ if there exists $\psi\in L_n$ consistent with $T$ such that $T\models \psi\to \Gamma.$ 
Other wise $\Gamma$ is non principal over T.

\begin{theorem}  Let $T\subseteq Sn^{L_0}$ be consistent and assume that $\lambda$ is a regular cardinal, and $|T|=\lambda$. 
Let $\kappa<2^{\lambda}$. Let $(\Gamma_i:i\in \kappa)$ be a set of non-principal maximal types in $T$. 
Then there is a model $\M$ of $T$ that omits
all the $\Gamma_i$'s
That is, there exists a model $\M\models T$ such that is no $s:n\to \M$
such that   $s\in \bigcap_{\phi\in \Gamma_i}\phi^{\M}$. 
\end{theorem}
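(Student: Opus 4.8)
The idea is to see this statement as, after a purely formal translation, an instance of Theorem \ref{uncountable}. The point is that $T\subseteq Sn^{L_0}$ has no constant or function symbols of its own, so the passage from $L_0$ to the $n$-constant language $L_n$ is literally the operation of forming an $n$-neat reduct: an $L_n$-sentence is the same thing as an $L_0$-formula whose free variables lie among a fixed $n$-element set. Thus quantifier elimination, which did the corresponding job in the preceding (elimination-of-quantifiers) theorem, is here replaced by this trivial observation.

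Concretely, I would let $\B=\Fm_{T_{\omega}}$ be the locally finite $\omega$-dimensional cylindric algebra built from the $L_0$-formulas in the variables $x_0,x_1,\dots$ modulo provable equivalence over $T$ (well defined, since $T$ mentions no constants), and set $\A=\Nr_n\B$. The standard correspondence $c_k\leftrightarrow x_k$ (after renaming bound variables away from $x_0,\dots,x_{n-1}$, and using that in a locally finite algebra an element of dimension $\le n$ is $T$-equivalent to a formula with free variables among $x_0,\dots,x_{n-1}$) is then a faithful bijection between the $L_n$-sentences modulo $T$ and the elements of $\A$. In particular $\A\in\Nr_n\CA_\omega\subseteq S_c\Nr_n\CA_\omega$ and $|\A|=\lambda$. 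Under this identification a maximal type $\Gamma_i$ of $T$ is an ultrafilter $F_i$ of $\A$, and $\Gamma_i$ is non-principal over $T$, i.e. no $\psi$ consistent with $T$ satisfies $T\models\psi\to\Gamma_i$, precisely when $F_i$ is a non-principal ultrafilter of $\A$ (has no nonzero lower bound).

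Next I would apply Theorem \ref{uncountable} to $\A$, to the cardinal $\kappa<2^{\lambda}$, and to the system $(F_i:i<\kappa)$ of non-principal ultrafilters. It delivers a set algebra $\C$ with base $U$, $|U|\le\lambda$, and a homomorphism $f:\A\to\C$ with $f(1)\neq 0$ and $\bigcap_{x\in F_i}f(x)=0$ for every $i<\kappa$. Since in that proof $f$ is obtained by relativizing to $n$-ary sequences a representation of the $\omega$-dimensional algebra $\B$ with base $U$, there is an $L_0$-structure $\M$ with universe $U$ such that $\phi^{\M}=f(\phi/T)$ for every $L_0$-formula $\phi$ with free variables among $x_0,\dots,x_{n-1}$; taking $\phi\in T$ gives $\M\models T$. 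Finally, if some $s:n\to M$ satisfied $s\in\bigcap_{\phi\in\Gamma_i}\phi^{\M}$, then $s\in f(x)$ for every $x\in F_i$, so $s\in\bigcap_{x\in F_i}f(x)=\emptyset$, a contradiction; hence $\M$ omits every $\Gamma_i$, as required.

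I do not expect a real obstacle beyond Theorem \ref{uncountable} itself, which we are granted; the substance is two routine checks. The first is the identification $\A=\Nr_n\Fm_{T_{\omega}}\in S_c\Nr_n\CA_\omega$ and the matching of non-principal maximal types with non-principal ultrafilters; this is where the absence of constants in $T$ is used, in lieu of elimination of quantifiers. The second is the cardinality audit ($|\A|=\lambda$ with $\lambda$ regular, so that the Shelah-type lemma underlying Theorem \ref{uncountable} applies, with $\lambda$ taken uncountable as in that theorem). Everything else, in particular that a representation annihilating the $F_i$'s yields a model of $T$ omitting the corresponding types, is immediate from the identity $\phi^{\M}=f(\phi/T)$.
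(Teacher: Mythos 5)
Your proposal is correct and follows essentially the same route as the paper: both reduce the statement to Theorem \ref{uncountable} by exhibiting the algebra of $L_n$-sentences modulo $T$ as a full neat $n$-reduct of a locally finite $\omega$-dimensional cylindric algebra (hence in $S_c\Nr_n\CA_\omega$) and by matching maximal non-principal types with non-principal ultrafilters. The only difference is the bookkeeping device: the paper adjoins $\omega$ many constants and works with the cylindric algebra of sentences $\Sn^{L_\omega}/T$ (with $\exists_k\phi=\exists x\,\phi(c_k|x)$ as cylindrifications), whereas you translate the $n$ constants into the first $n$ variables and take $\Nr_n\Fm_{T_\omega}$ directly; these are equivalent realizations of the same embedding.
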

\begin{demo}{Proof}
Let $T\subseteq Sn^{L_0}$ be consistent. Let $\M$ be an $\L_0$  model of $T$. 
For $\phi\in Sn^L$ and $k<\alpha$
let $\exists_k\phi:=\exists x\phi(c_k|x)$ where $x$ is the first variable
not occuring in $\phi$. Here $\phi(c_k|x)$ is the formula obtained from $\phi$ by 
replacing all occurences of $c_k$, if any, in $\phi$ by $x$.
Let $T$ be as indicated above, i.e $T$ is a set of sentences in which no constants occur. Define the
equivalence relation $\equiv_{T}$ on $^2Sn^L$ as follows
$$\phi\equiv_{T}\psi \text { iff } T\models \phi\equiv \psi.$$
Then, as easily checked $\equiv_{T}$ is a 
congruence relation on the algebra 
$$\Sn=\langle Sn,\land,\lor,\neg,T,F,\exists_k, c_k=c_l\rangle_{k,l<n}$$  
We let $\Sn^L/T$ denote the quotient algebra.
In this case, it is easy to see that $\Sn^L/T$ is a $\CA_n$, in fact is an $\RCA_n$.
Let $L$ be as described in  above. But now we denote it $L_n$, 
the subsript $n$ indicating that we have $n$-many 
individual constants. Now enrich $L_{n}$ 
with countably many constants (and nothing else) obtaining
$L_{\omega}$. 
Recall that both languages, now, have a list of $\omega$ variables. 
For $\kappa\in \{n, \omega\}$ 
let $\A_{\kappa}=\Sn^{L_{k}}/{T}$. 
For $\phi\in Sn^{L_n}$, let $f(\phi/T)=\phi/{T}$. 
Then, as easily checked $f$ is an embedding  of $\A_{n}$ 
into $\A_{\omega}$. Moreover $f$ has the additional property that
it maps $\A_{n}$, into (and onto) the neat $n$ reduct of $\A_{\beta}$,
(i.e. the set of $\alpha$ dimensional elements of $A_{\beta}$). 
In short, $\A_{n}\cong \Nr_{n}\A_{\omega}$. Now agin putting $X_i=\{\phi/T: \phi\in \Gamma_i\}$ and using that the 
$\Gamma_i$'s are maximal non isolated, it follows that the 
$X_i's$ are non-principal ultrafilters
Since $\Nr_n\CA_{\omega}\subseteq S_c\Nr_n\CA_{\omega}$, then our result follows, also from Theorem 1.    
\end{demo}

\end{document}